\def\smallskip{\vskip\smallskipamount}
\def\medskip{\vskip\medskipamount}
\def\bigskip{\vskip\bigskipamount}
\newtheoremstyle{thmstyle}{}{}{\itshape}{}{\bfseries}{ }{5pt}{}
\newtheoremstyle{exstyle}{}{}{}{}{\bfseries}{ }{5pt}{}
\newtheoremstyle{defstyle}{}{}{}{}{\bfseries}{ }{5pt}{}
\newtheoremstyle{remstyle}{}{}{}{}{\bfseries}{ }{5pt}{}
\theoremstyle{thmstyle}
\newtheorem{thm}{Theorem}[section]
\newtheorem{theorem}[thm]{Theorem}
\newtheorem{lemma}[thm]{Lemma}
\newtheorem{proposition}[thm]{Proposition}
\newtheorem{corollary}[thm]{Corollary}
\newtheorem{conjecture}{Conjecture}[section]
\theoremstyle{exstyle}
\theoremstyle{defstyle}
\newtheorem{def-prop}[thm]{Definition-Proposition}
\newtheorem{def-lem}[thm]{Definition-Lemma}
\newtheorem{rem-convention}[thm]{Remark-Convention}
\newtheorem{def-note}[thm]{Definition-Notation}
\theoremstyle{remstyle}
\newtheorem{remark}[thm]{Remark}
\theoremstyle{remstyle}
\newcommand{\Hom}{\operatorname{Hom}}
\newcommand{\taurigid}{\operatorname{\tau-rigid}}
\DeclareMathOperator*{\End}{End}
\DeclareMathOperator*{\rad}{rad}
\DeclareMathOperator*{\Image}{Im}		
\DeclareMathOperator*{\modu}{mod}
\DeclareMathOperator*{\proj}{proj}
\DeclareMathOperator*{\ann}{ann}
\newcommand{\Z}{\mathcal{Z}}
\DeclareMathOperator*{\ind}{ind}
\DeclareMathOperator*{\brick}{brick}
\DeclareMathOperator*{\pd}{pd}
\DeclareMathOperator*{\id}{id}
\DeclareMathOperator*{\GL}{GL}
\DeclareMathOperator*{\rep}{rep}
\DeclareMathOperator*{\Irr}{Irr}
\DeclareMathOperator*{\supp}{supp}
\newcommand{\cupdot}{\mathbin{\mathaccent\cdot\cup}}
\newcommand{\doublewidetilde}[1]{{%
  \mathpalette\double@widetilde{#1}%
}}
\newcommand{\double@widetilde}[2]{%
  \sbox\z@{$\m@th#1\widetilde{#2}$}%
  \ht\z@=.9\ht\z@
  \widetilde{\box\z@}%
}
\begin{document}

\subjclass [2020]{16G20, 16G60, 16D80, 16E30}

\title{Hom-orthogonal modules and brick-Brauer-Thrall conjectures}

\author[Kaveh Mousavand, Charles Paquette]{Kaveh Mousavand, Charles Paquette} 
\address{Unit of Representation Theory and Algebraic Combinatorics, Okinawa Institute of Science and Technology, Okinawa, Japan}
\email{mousavand.kaveh@gmail.com}
\address{Department of Mathematics and Computer Science, Royal Military College of Canada, Kingston ON, Canada}
\email{charles.paquette.math@gmail.com}

\begin{abstract}
For finite dimensional algebras over algebraically closed fields, we study the sets of pairwise Hom-orthogonal modules and obtain new results on some open conjectures on the behaviour of bricks and several related problems, which we generally refer to as brick-Brauer-Thrall (bBT) conjectures. 
Using some algebraic and geometric tools, and in terms of the notion of Hom-orthogonality, we find necessary and sufficient conditions for the existence of infinite families of bricks of the same dimension. 
This sheds new light on the bBT conjectures and we prove some of them for new families of algebras. 
Our results imply some interesting algebraic and geometric characterizations of brick-finite algebras as conceptual generalizations of local algebras. We also verify the bBT conjectures for any algebra whose Auslander-Reiten quiver has a generalized standard component, which particularly extends some results of Chindris-Kinser-Weyman on the algebras with preprojective components.
\end{abstract}

\maketitle

\tableofcontents

\section{Introduction and Main Results}\label{Section: Introduction}
In the following, $A$ always denotes a finite dimensional unital associative algebra over an algebraically closed field $k$. Without loss of generality, we further assume $A$ is basic and connected of rank $n$. A module always means a finitely generated left $A$-modules, considered up to isomorphism.
A (finite dimensional) module $X$ is said to be a \emph{brick} (also known as Schur representation) if $\End_A(X)\simeq k$, and $A$ is called \emph{brick-finite} if it admits only a finite number of (isomorphism classes of) bricks.
There has been a long history of study of bricks in various domains of representation theory and related topics. Many of the more recent studies of bricks are motivated by their connections to wide subcategories and torsion pairs, $\tau$-tilting theory, tilting and silting theory, stability conditions and wall-chamber structures, geometry of representation varieties, realizations of $g$-vector fans, just to mention a few.

\medskip

In this work, we pursue two goals. First, we adopt a less technical viewpoint and study bricks through the lens of Hom-orthogonal modules. Second, we develop our earlier studies on some open conjectures on the behaviour of brick-infinite algebras. 
Recall that a pair of non-zero $A$-modules $X$ and $Y$ is said to be \emph{Hom-orthogonal} if $\Hom_A(X,Y)=\Hom_A(Y,X)=0$.  Henceforth, by a Hom-orthogonal set $\mathcal{S}$ we mean a non-empty set of non-zero modules in $\modu A$, such that elements of $\mathcal{S}$ are pairwise non-isomorphic and Hom-orthogonal, that is, $\Hom_A(X,Y)=\Hom_A(Y,X)=0$, for any pair of distinct $X$ and $Y$ in $\mathcal{S}$.
As shown below, the study of Hom-orthogonality from algebraic and geometric viewpoints provides new insights into the behaviour of bricks and some challenging open conjectures.
To put our work in perspective, we summarize our results in three main theorems, each of which consists of several assertions shown in the following sections. The notions and terminology that are not explicitly defined in this section will be recalled throughout the note.

\medskip

Our first theorem establishes a useful relationship between Hom-orthogonal sets of arbitrary modules and Hom-orthogonal sets of bricks. Let us recall that, as in \cite{As}, a Hom-orthogonal set of bricks in $\modu A$ is often called a \emph{semibrick} of $A$, and the size of a semibrick is defined to be the number of bricks belonging to it.

\medskip

\textbf{Theorem A:}
For algebra $A$, if $\mathcal{S}=\{X_i\}_{i\in I}$ is a Hom-orthogonal set in $\modu A$, then there exists a Hom-orthogonal set of bricks $\mathcal{S}_b=\{B_i\}_{i\in I}$, where each $B_i$ is a submodule and a quotient module of $X_i$. 
Moreover, we have 
\begin{enumerate}
    \item If $A$ is a brick-finite algebra of rank $n$, then there is no Hom-orthogonal set of modules with $n+1$ elements. In addition, there exists a unique semibrick of size $n$, and it is given by the simple modules in $\modu A$.
    \item If $A$ admits an infinite set of Hom-orthogonal modules of dimension $d'$, then there exists an infinite semibrick consisting of bricks of dimension $d \leq d'$.
\end{enumerate}

\medskip
Each non-empty subset of a Hom-orthogonal set is evidently Hom-orthogonal. In particular, any algebra with an infinite semibrick admits infinitely many semibricks of size $n$.
The above theorem poses an \emph{a priori} simpler version of the Semibrick Conjecture (Conjecture \ref{Conj:Semibrick Conjecture}) as follows: $A$ is brick-finite if and only if it admits a unique semibrick of size $n$. For more details, see Sections \ref{Section: Preliminaries and Background} and \ref{Section: Hom-orthogonal modules and brick-finiteness}.

\medskip

The next proposition summarizes some of our  results on Hom-orthogonality in a more geometric setting. More precisely, for each representation variety $\rep(A,\underline{d})$, we study the interaction between the orbits of an irreducible component $\Z$ of $\rep(A,\underline{d})$. The undefined notations and terminology are recalled in Sections \ref{Section: Preliminaries and Background} and \ref{Section: Orthogonal orbits and irreducible components}.

\medskip

\textbf{Proposition B:}
Let $A$ be an algebra and $\Z$ an irreducible component of $A$. If $\Z$ has a dense orbit, then for each $X$ and $Y$ in $\Z$, we have $\Hom_A(X,Y)\neq 0$.
Furthermore, the following are equivalent:
\begin{enumerate}
    \item An irreducible component of $A$ contains a pair of orthogonal orbits;
    \item $A$ admits infinitely many bricks of the same dimension.
\end{enumerate}
In fact, if $A$ is brick-finite, for each irreducible component $\Z$, and every pair $X$ and $Y$ in $\Z$, we have $\Hom_A(X,Y)\neq 0$.
\medskip

The first assertion is proved in Lemma \ref{No orhtogonal modules when a dense orbit}, and the complete proof of Proposition B is given in Section \ref{Section: Orthogonal orbits and irreducible components}, where we also make some observations on connections to some recent related work in the geometric setting. Meanwhile, let us note that $A$ is a local algebra if and only if, for any pair of non-zero modules $X$ and $Y$, we have $\Hom_A(X,Y)\neq 0$. Hence, the last assertion of Proposition B implies that brick-finite algebras can be seen as a conceptual generalization of local algebras (see Corollary \ref{Cor: geometric version of local algebras}). This further allows us to rephrase the Second brick-Brauer-Thrall conjecture (Conjecture \ref{Conj: 2ndbBT}) as follows: $A$ is brick-finite if and only if for each brick component $\Z$, and every pair $X$ and $Y$ in $\Z$, we have $\Hom_A(X,Y)\neq 0$.

\medskip

Our second main result is concerned with some interesting connections between the generalized standard components in the Auslander-Reiten quivers and the infinite families of bricks of the same dimension.
In particular, for those algebras which admit a generalized standard component, the following theorem verifies several open brick-Brauer-Thrall conjectures listed in Section \ref{Subsection:bBT Conjs}.
The necessary materials and terminology are recalled in Sections \ref{Section: Preliminaries and Background} and \ref{Section: Bricks and generalized standard components}.

\medskip

\textbf{Theorem C:}
Let $A$ be an algebra whose Auslander-Reiten quiver contains a generalized standard component. Then, either $A$ is representation-finite, or else all of the following properties hold for $A$:
\begin{enumerate}
    \item $A$ admits an infinite family of bricks of the same dimension;
    \item $A$ admits an infinite semibrick;
    \item There is a rational ray outside of the $\tau$-tilting fan of $A$;
    \item For some $\theta \in K_0(\proj A)$, there is an infinite family of $\theta$-stable bricks of the same dimension.
\end{enumerate}

\medskip

We remark that those algebras that admit a preprojective component have been crucial in the study of representation theory of algebras. Meanwhile, it is known that each preprojective component is generalized standard. The previous theorem particularly extends the setting and main results of \cite{CKW}, where similar studies are conducted over those algebras that have preprojective components.

\medskip

The stable tubes in the Auslander-Reiten quiver $\Gamma_A$ are known to be decisive in the representation theory of $A$. For instance, the Second Brauer-Thrall conjecture (now theorem), together with some fundamental results of Crawley-Boevey \cite{C-B}, imply that a tame algebra $A$ admits a $1$-parameter family of indecomposables if and only if $\Gamma_A$ has a stable tube. This is the case, if and only if $\Gamma_A$ contains an infinite family of homogeneous tubes whose mouths consist of modules of the same dimension.
As a consequence of the previous theorem, and using some classical results on tame algebras, we obtain a brick-analogue of these important results over tame algebras.
\medskip

\textbf{Corollary D:}
For any tame algebra $A$, the following are equivalent:
\begin{enumerate}
    \item $A$ admits a $1$-parameter family of bricks;
    \item $\Gamma_A$ contains a (generalized) standard stable tube;
    \item $\Gamma_A$ contains an infinite family of pairwise orthogonal homogeneous tubes whose mouth are of the same dimension;
    \item For some $\theta \in K_0(\proj A)$, there is a rational curve of $\theta$-stable modules.
\end{enumerate}

\medskip
As explained in the following sections, the Second brick-Brauer-Thrall ($2nd$ bBT) conjecture has played a central role in our recent studies.
Although this problem can be treated in more technical settings (for instance, see \cite{Mo1, Mo2}, \cite{MP1, MP2, MP3}, \cite{STV}, \cite{Pf1, Pf2}), we believe the novelty of the $2nd$ bBT conjecture is more apparent from its non-technical articulation: \emph{If $A$ is brick-infinite, then there exists an infinite family of bricks of the same dimension}. 
In fact, some of the other open conjectures listed in Section \ref{Subsection:bBT Conjs} either follow from the $2nd$ bBT (see Remark \ref{Rem:on semibrick conjecture}), or are shown to be equivalent to it in certain settings (see \cite{MP2} and \cite{Pf2}).  

\medskip

Let us remark that the basic notion of Hom-orthogonality that we adopt in this work leads to some equivalent conditions that should provide fresh impetus to the study of several challenging problems.
In particular, for an algebra $A$, our results imply that the following are equivalent:
\begin{enumerate}
    \item There exists some $d\in \mathbb{Z}_{>0}$ for which $\brick(A,d)$ is an infinite set;
    \item $A$ admits an infinite semibrick consisting of bricks of dimension $d$;
    \item A component $\Z \in \Irr(A,d')$ contains $X$ and $Y$ with $\Hom_A(X,Y)=0$;
    \item A component $\Z\in \Irr(A,d')$ has infinitely many orthogonal orbits. 
\end{enumerate}

We observe that, after applying some geometric arguments, some of the above implications can also be concluded from \cite[Theorem 1.5]{G+}. We further note that, in the above set of equivalences, if one (and thus all) of these conditions holds, we can choose $d=d'$ so that all of the statements hold for $d$.
Because these implications are consequences of our main results, separate proofs are not provided. Instead, let us remark that $(1)\rightarrow (2)$ follows from Proposition \ref{Prop:2nd bBT gives an infinite family of orthogonal bricks}, and $(2) \rightarrow (3)$ follows from a standard geometric observation, and $(3)\rightarrow (4) \rightarrow (1)$ follow from Theorem \ref{Thm:1-side vanishing Hom of two modules}.

\medskip

We finish this section with an outline of the content of the following sections.
In Section \ref{Section: Preliminaries and Background} we recall some background materials and the open conjectures related to the scope of our studies, which we generally call the brick-Brauer-Thrall (bBT) conjectures.
In Section \ref{Section: Hom-orthogonal modules and brick-finiteness}, we consider the notion of Hom-orthogonality in the category of finite dimensional modules and prove Theorem A.
In Section \ref{Section: Orthogonal orbits and irreducible components}, we treat the notion of orthogonality in the irreducible components of representation varieties and prove Proposition B.
In Section \ref{Section: Bricks and generalized standard components}, we consider the connection between the components of the Auslander-Reiten quiver and the behavior of bricks, and prove Theorem C and Corollary D.

\section{Preliminaries and Background}\label{Section: Preliminaries and Background}

This section mainly contains some tools and known results that we freely use througout the paper. All the the standard or rudimentary materials can be found in \cite{ARS}, \cite{ASS} and \cite{SS}. For the more technical results, We provide references. 

\medskip

\noindent \textbf{Notations and setting}:
Throughout, $k$ is assumed to be an algebraically closed field and $A$ always denotes a finite dimensional, basic, connected, unital associative $k$-algebra of rank $n$.
By $\modu A$ we denote the category of finitely generated left $A$-modules.
Due to our assumptionss on $A$, there exists a unique quiver $Q$ with $n$ vertices, and an admissible ideal $I$ in $kQ$, such that $A \simeq kQ/I$.
Hence, each $M$ in $\modu A$ can be seen as a representation of the bound quiver $(Q,I)$, whose dimension vector in $ \mathbb{Z}^n_{\geq 0}$ is denoted by $\underline{\dim} M$.
By $\proj A$, we denote the full exact subcategory of $\modu A$ whose objects are projective modules, and $K^b({\rm proj}A)$ denotes the homotopy category of bounded complexes of $\proj A$. Moreover, $D^b(\modu A)$ denotes the bounded derived category of $\modu A$. By suitable choices of bases, $K_0(K^b({\rm proj}A))$ and $K_0(\rm proj A)$, as well as $K_0(D^b(\modu A))$, are identified with $\mathbb {Z}^n$.

For each $d\in \mathbb{Z}_{\geq 0}$, let $\ind(A,d)$ denote the set of all indecomposable modules of dimension $d$, and define $\ind(A)$ to be the union of all $\ind(A,d)$, for $d\in \mathbb{Z}_{\geq 0}$. Then, $A$ is called \emph{representation-finite} if $\ind(A)$ is a finite set.
Similarly, by $\brick(A,d)$, we denote the set of all of bricks of dimension $d$, and
define $\brick(A)$ to be the union of all $\brick(A,d)$, for all $d\in \mathbb{Z}_{\geq 0}$. Recall that, $A$ is said to be \emph{brick-finite} provided $\brick(A)$ is a finite set. Evidently, $\brick(A,d)\subseteq \ind(A,d)$, for all $d\in \mathbb{Z}_{\geq 0}$.
We also remark that, for a given family, say $F$, we say a property holds for ``almost all" elements of $F$ to mean it holds for ``all but a finite number of elements" in $F$.

\medskip

\subsection{Representation varieties and their components}\label{Subsection: Rep. varieties and components}
For algebra $A$ and a dimension vector $\underline{d} \in (\mathbb{Z}_{\geq 0})^n$, by $\rep(A,\underline{d})$ we denote the affine variety which parametrizes the modules $X$ in $\modu A$ whose dimension vector is $\underline{d}$. Recall that the general linear group $\GL(\underline{d})$ acts on $\rep(A,\underline{d})$ via conjugation and turns it into an affine variety. In particular, for $X$ in $\rep(A,\underline{d})$, let $O_X$ be the $\GL(\underline{d})$-orbit of $X$ and by $\overline{O_X}$ we denote the closure of $O_X$. 

\medskip

Let $\Irr(A,\underline{d})$ denote the set of all irreducible components of $\rep(A,\underline{d})$, and define $\Irr(A):= \bigcup_{\underline{d} \in \mathbb{Z}_{\geq 0}^n} \Irr(A,\underline{d})$. 
For each $\mathcal{Z} \in \Irr(A)$, define $$c(\Z):=\min \{\dim(\Z) - \dim(O_Z) \,| Z \in \Z \}.$$ 
In other words, $c(\Z)$ denotes the generic number of parameters of $\Z$. In particular, $c(\Z)=0$ if and only if $\Z$ contains an open orbit (i.e, $\Z=\overline{O_M}$, for some $M \in \Z$).

\medskip

By $T_M(\underline{d})$ and $T_M(O_M)$ we respectively denote the tangent spaces of $M$ in $\rep(A,\underline{d})$ and in $O_M$. Observe that $T_M(O_M)$ is a subspace of $T_M(\underline{d})$ as a vector space. The following lemma is well-known and plays an important role in the next sections. For a proof of this fact and more details on the geometric setting, we refer to \cite{De} and references therein.

\begin{lemma}\label{Lem: tanget spaces-isomorphism}
With the above notations, there always exists a monomorphism
$$T_M(\underline{d})/T_M(O_M) \longrightarrow {\rm Ext}^1_A(M,M).$$
Moreover, we get the isomorphism $T_M(\underline{d})/T_M(O_M) \simeq {\rm Ext}^1_A(M,M)$, provided that ${\rm Ext}^2_A(M,M)=0$.
\end{lemma}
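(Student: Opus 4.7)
The plan is to interpret tangent vectors at $M$ as first-order infinitesimal deformations of the $A$-module structure, and to identify such deformations with self-extensions of $M$. Write $A \simeq kQ/I$, so that $\rep(A,\underline{d})$ is the closed subvariety of the ambient affine space $\rep(kQ,\underline{d}) = \prod_{\alpha: i \to j} M_{d_j \times d_i}(k)$ cut out by the relations in $I$. A Zariski tangent vector at $M = (X_\alpha)_{\alpha \in Q_1}$ is a tuple $Y = (Y_\alpha)$ satisfying $r(X + \epsilon Y) \equiv 0 \pmod{\epsilon^2}$ for every $r \in I$. Such a $Y$ is equivalent to a $k$-linear derivation $\delta : A \to \End_k(M)$ (satisfying $\delta(ab) = a\delta(b) + \delta(a)b$ and vanishing on the idempotents $e_i$), determined by $\delta(\alpha) = Y_\alpha$ on arrows.

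Given such a $\delta$, I would build the $A$-module $E_\delta$ by taking $E_\delta = M \oplus M$ as a $k$-vector space with action $a \cdot (m_1, m_2) := (a m_1 + \delta(a) m_2,\, a m_2)$. The Leibniz rule is precisely the associativity condition, and $E_\delta$ fits into a short exact sequence $0 \to M \to E_\delta \to M \to 0$, yielding a class $\Phi(\delta) \in \Ext^1_A(M,M)$. The map $\delta \mapsto \Phi(\delta)$ is $k$-linear. Its kernel consists of derivations whose extension splits, which happens exactly when $\delta(a) = fa - af$ for some $f \in \mathfrak{gl}(\underline{d}) \subseteq \End_k(M)$, i.e., $\delta$ is inner. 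On the geometric side, the inner derivations are precisely the tangent vectors arising by differentiating the conjugation action of $\GL(\underline{d})$ on $M$ at the identity, so they coincide with $T_M(O_M)$. Hence $\Phi$ descends to a monomorphism
$$T_M(\underline{d}) / T_M(O_M) \;\hookrightarrow\; \Ext^1_A(M, M).$$

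For the moreover statement, any extension class $\xi \in \Ext^1_A(M, M)$ is realized by some $\delta$: choose a $k$-linear splitting of the representative extension and read off the action, producing a derivation whose associated extension is equivalent to $\xi$. Thus the map is surjective at the level of the scheme-theoretic tangent space. The possible gap between this and the reduced tangent space used in the lemma is controlled by the smoothness of $\rep(A, \underline{d})$ at $M$. The main step is then the classical deformation-theoretic fact that the obstructions to smoothness (equivalently, to lifting first-order deformations of $M$ to higher orders while respecting the relations of $I$) live in $\Ext^2_A(M,M)$. Assuming $\Ext^2_A(M,M) = 0$, the scheme $\rep(A, \underline{d})$ is smooth at $M$, the reduced and scheme-theoretic tangent spaces agree, and the monomorphism becomes an isomorphism. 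The main obstacle is this obstruction-theoretic statement; it follows from a standard Gerstenhaber-style cochain argument using a projective resolution of $M$, the full details of which are in the references cited after the lemma (notably \cite{De}).
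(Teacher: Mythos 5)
The paper does not actually prove this lemma; it labels it as well-known and refers to \cite{De} and the references therein, so there is no internal argument of the authors to compare against. Your proposal is the standard deformation-theoretic derivation of the statement, and it is correct. The identifications you make --- scheme-theoretic tangent vectors at $M$ with derivations $\delta\colon A \to \End_k(M)$ vanishing on the idempotents $e_i$, the image of the differential of the orbit map $\GL(\underline{d}) \to O_M$ with the inner derivations induced by block-diagonal $f$, and the quotient with $\Ext^1_A(M,M)$ --- are all accurate, and the Leibniz computation verifying that $E_\delta$ is an $A$-module and fits in the displayed short exact sequence is the right one. You also correctly read the lemma's $T_M(\underline{d})$ as the Zariski tangent space of the underlying reduced variety, which embeds in the scheme-theoretic one, which is why the general statement is only an injection; and you correctly invoke that vanishing of $\Ext^2_A(M,M)$ kills all obstructions to lifting first-order deformations, making $\rep(A,\underline{d})$ smooth at $M$, so the two tangent spaces coincide and the map becomes an isomorphism. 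Since the paper itself defers this obstruction calculation to \cite{De}, deferring it in your write-up matches the intended level of detail. One small point worth making explicit: for $\Phi$ to descend to a monomorphism on $T_M(\underline{d})/T_M(O_M)$ you need both $T_M(O_M) \subseteq T_M(\underline{d})$ (which holds because $O_M$ is a smooth locally closed subvariety of the reduced variety) and that $\Ker\bigl(\Phi|_{T_M(\underline{d})}\bigr)$ is exactly $T_M(O_M)$, which follows by intersecting with your computation of the kernel on the full scheme-theoretic tangent space. Both facts are implicit in your argument, but should be stated.
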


The well-known Voigt's lemma is a consequence of Lemma \ref{Lem: tanget spaces-isomorphism} and asserts that if ${\rm Ext}^1_A(M,M)=0$, then $O_M$ is open. Observe that the converse of Voigt's lemma does not hold in general, unless we specify that $O_M$ is scheme-theoretically open. Namely, we can have that $O_M$ is open in the variety $\rep(A,\underline{d})$ while $M$ is non-rigid (for instance, consider the simple module over the local algebra $A=k[x]/\langle x^2 \rangle$).

\medskip

\medskip

For each $\Z$ in $\Irr(A)$, it is well-known that the set of bricks in $\Z$ form an open subset, and $\Z$ is called a \emph{brick component} if this subset is non-empty.
From a standard geometric argument, it follows that $\Z$ either contains a single orbit of bricks, or else infinitely many distinct orbits of bricks belong to $\Z$. This respectively implies $\Z=\overline{O_M}$, or else $\Z=\overline{\bigcup_{M}O_M}$, where $M$ runs through the set of all bricks in $\Z$.
Note that the latter case holds if and only if $\brick(A,d)$ is an infinite set, for some $d\in \mathbb{Z}_{>0}$. That being the case, $A$ is evidently brick-infinite. However, the converse is not known and is the content of one of the open conjectures that we treat in this work (see Conjecture \ref{Conj: 2ndbBT}).

\medskip

The following proposition on brick components will be handy in the next sections. Although the assertion is immediate from the above facts, we present a short proof.

\begin{proposition}\label{Prop: bricks with pd=1}
Let $A$ be an algebra and $M$ be a brick of dimension $d$ in $\modu A$. If $\pd_A(M)\leq 1$, then either $M$ is $\tau$-rigid, or else $\brick(A,d)$ is an infinite set.
\end{proposition}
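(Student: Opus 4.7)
The plan is to reduce the statement to a dimension-counting argument on an irreducible component of $\rep(A,\underline{d})$ containing $M$, where $\underline{d}=\underline{\dim}\,M$. Assume $M$ is not $\tau$-rigid; the goal is then to exhibit infinitely many isomorphism classes of bricks of dimension $d$.

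First, I would invoke the standard fact that, under the hypothesis $\pd_A(M)\le 1$, $\tau$-rigidity coincides with classical rigidity, so $\Ext^1_A(M,M)\ne 0$. The proof of this equivalence uses the Auslander--Reiten formula $D\Ext^1_A(M,M)\simeq \overline{\Hom}_A(M,\tau M)$ together with the observation that, when $\pd_A(M)\le 1$, every map $M\to \tau M$ factoring through a projective vanishes, so $\overline{\Hom}_A(M,\tau M)=\Hom_A(M,\tau M)$. The same hypothesis yields $\Ext^2_A(M,M)=0$, so Lemma \ref{Lem: tanget spaces-isomorphism} gives the isomorphism $T_M(\underline{d})/T_M(O_M)\simeq \Ext^1_A(M,M)$; in particular $\dim T_M\bigl(\rep(A,\underline{d})\bigr)>\dim O_M$.

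The key step is to find an irreducible component $\mathcal{Z}$ of $\rep(A,\underline{d})$ containing $M$ with $\dim\mathcal{Z}>\dim O_M$. If this failed, then every irreducible component through $M$ would be irreducible of the same dimension as $\overline{O_M}$ and therefore coincide with $\overline{O_M}$; in particular $M$ would sit on a unique component. But then $\rep(A,\underline{d})$ would locally equal $\overline{O_M}$ near $M$, and since $O_M$ is an open smooth sub-orbit of $\overline{O_M}$ containing $M$, one would obtain $T_M\bigl(\rep(A,\underline{d})\bigr)=T_M\bigl(\overline{O_M}\bigr)=T_M(O_M)$, contradicting the strict inequality above.

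Given such a $\mathcal{Z}$, openness of the brick locus finishes the proof: the set of bricks in $\mathcal{Z}$ is open and non-empty (it contains $M$), hence dense, of dimension $\dim\mathcal{Z}$. Every brick $N$ of dimension vector $\underline{d}$ has $\Aut(N)=k^{\times}$, so each brick orbit in $\rep(A,\underline{d})$ has the fixed dimension $\dim\GL(\underline{d})-1=\dim O_M<\dim\mathcal{Z}$; hence no finite union of brick orbits can exhaust the brick locus of $\mathcal{Z}$, and infinitely many pairwise non-isomorphic bricks of dimension $d$ must lie in $\mathcal{Z}$. The main obstacle is the middle step: one has to identify $T_M\bigl(\rep(A,\underline{d})\bigr)$ with the tangent space of the unique component through $M$, and combine this local identification with the fact that an open sub-orbit of an irreducible variety recovers the ambient tangent space at any of its points in order to force the existence of a strictly larger component via Lemma \ref{Lem: tanget spaces-isomorphism}.
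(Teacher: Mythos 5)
Your proof is correct and follows essentially the same route as the paper: both arguments use Lemma~\ref{Lem: tanget spaces-isomorphism} with $\Ext^2_A(M,M)=0$ and Auslander--Reiten duality to show that $M$ is $\tau$-rigid precisely when $T_M(\underline{d})=T_M(O_M)$, and then conclude that a brick component through $M$ without a dense orbit must contain infinitely many brick orbits. You simply make explicit two steps the paper leaves implicit (the contradiction producing a component $\mathcal{Z}$ with $\dim\mathcal{Z}>\dim O_M$, and the dimension count showing that finitely many brick orbits, each of dimension $\dim\GL(\underline{d})-1$, cannot cover the open brick locus of $\mathcal{Z}$).
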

\begin{proof}
First we observe that, for an arbitrary irreducible component $\Z \in \Irr(A,\underline{d})$ and some $X$ in $\Z$, we have $O_X$ is an open orbit in $\Z$ if and only if $T_X(\underline{d}) = T_X(O_X)$, which is the case if and only if $\Z=\overline{O_X}$.

Let $\Z \in \Irr(A,\underline{d})$ be an irreducible component that contains the brick $M$ with $\pd_A(M)\leq 1$. From Lemma \ref{Lem: tanget spaces-isomorphism}, together with the Auslander-Reiten duality, it is immediate that $T_M(\underline{d})/T_M(O_M)\simeq \Hom_A(M,\tau M)$, as an isomorphism of vector spaces. 
Now the conclusion is clear. In particular, $M$ is $\tau$-rigid if and only if $O_M$ is open. Thus, if the orbit of $M$ is not open, then $\brick(A,d)$ is an infinite set.
\end{proof}

Recall that $K_0(D^b(\modu A))\simeq \mathbb {Z}^n$, and by the choice of the standard basis $\{[S_1], \ldots, [S_n]\}$ given by the simple modules in $\modu A$, for $M$ in $\modu A$, one can identify $\underline{\dim}M$ with the corresponding element $[M]$ in $K_0(D^b(\modu A))$. 
Observe that there is a natural identification of the Grothendieck groups $K_0(K^b({\rm proj}A))$ and $K_0(\rm proj A)$. Consider the basis $\{[P_1], \ldots, [P_n]\}$, given by the indecomposable projective modules. Then, we have the isomorphism $K_0(\rm projA) \cong \mathbb{Z}^n$.
Let us use $K_0(\rm proj A)_{\mathbb{Q}}$ and $K_0(\rm proj A)_{\mathbb{R}}$ to respectively denote $K_0(\rm proj A) \otimes_{\mathbb{Z}} \mathbb{Q}$ and $K_0(\rm proj A) \otimes_{\mathbb{Z}} \mathbb{R}$. Obviously, $K_0(\rm proj A)_{\mathbb{Q}}\cong \mathbb{Q}^n$ and $K_0(\rm proj A)_{\mathbb{R}}\cong \mathbb{R}^n$.
For a given $\theta$ in $K_0(\proj A)_{\mathbb{R}}$, and each $M \in \modu A$, define $\theta([M]):=\theta \cdot \underline{\dim} M$. In particular, for $\theta \in K_0(\proj A)$, a module $X \in \modu A$ is called \emph{$\theta$-stable} if $\theta([X])=0$ and, furthermore, for any nonzero proper submodule $Y$ of $X$, we have $\theta([Y])<0$. 

\medskip

It is well-known that if $X$ is $\theta$-stable, for some $\theta \in K_0(\proj A)$, then $X$ is a brick. However, the converse is not necessarily true for arbitrary bricks. Meanwhile, if $X$ is a brick which is homogeneous (i.e, $\tau X=X$), one can find $\theta \in K_0(\proj A)$ such that $X$ is $\theta$-stable. 
To explicitly describe this stability parameter $\theta$, following \cite[Section 3]{Do}, to each $M \in \modu A$, we associate $\theta_M \in K_0(\proj A)$, given by
$$\theta_M(-):=\dim_k\Hom_A(M,-)-\dim_k\Hom_k(-,\tau_A M).$$
Equivalently, for each $N \in \modu A$, we have $$\theta_M([N]):=\dim_k\Hom_A(P_0,N)-\dim_k\Hom_k(P_1,N),$$ where $P_1\rightarrow P_0 \rightarrow M\rightarrow 0$ is a minimal projective presentation of $M$. Thus, $\theta_M([N])$ is well-defined.
As already noted in \cite{CKW}, if $X$ is a homogeneous brick, one can easily show that $X$ is in fact $\theta_X$-stable. We mention this in the following lemma and use it in the next sections. For a proof, we refer to \cite[Lemma 2.5]{CKW}.
\begin{lemma}\label{Lem: Homogeneous brick is stable}
Let $X$ be a homogeneous brick in $\modu A$. Then, $X$ is $\theta_X$-stable. 
\end{lemma}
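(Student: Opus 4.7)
The plan is to verify directly the two defining conditions of $\theta_X$-stability. Since $X$ is homogeneous, $\tau_A X \cong X$, so for every $N \in \modu A$ the functional simplifies to
\[
\theta_X([N]) = \dim_k \Hom_A(X, N) - \dim_k \Hom_A(N, X).
\]
Setting $N = X$ and using the brick hypothesis $\End_A(X) \cong k$ gives $\theta_X([X]) = 1 - 1 = 0$, which is the first required condition.

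For the substantive condition, I would fix an arbitrary nonzero proper submodule $Y \subsetneq X$ and establish the two separate bounds $\Hom_A(X, Y) = 0$ and $\dim_k \Hom_A(Y, X) \geq 1$. The second is immediate since the inclusion $\iota \colon Y \hookrightarrow X$ is itself a nonzero morphism in $\Hom_A(Y, X)$. For the first, I would argue as follows: given any $f \colon X \to Y$, consider the composite $\iota \circ f \colon X \to X$. Because $X$ is a brick, this endomorphism is a scalar multiple of the identity. If the scalar were nonzero, then $\iota \circ f$ would be an isomorphism, but its image is contained in $Y$, contradicting $Y \subsetneq X$. Hence $\iota \circ f = 0$, and since $\iota$ is a monomorphism, $f = 0$.

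Combining the two bounds yields
\[
\theta_X([Y]) = 0 - \dim_k \Hom_A(Y, X) \leq -1 < 0,
\]
which is exactly the required strict negativity on proper nonzero submodules. I do not foresee a genuine obstacle: the homogeneity hypothesis enters only to convert the Auslander--Reiten translate appearing in the definition of $\theta_X$ into an ordinary Hom into $X$, after which the argument relies solely on the brick property of $X$ and the elementary observation that a nonzero scalar endomorphism of an indecomposable module cannot factor through a proper submodule.
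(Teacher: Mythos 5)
Your proof is correct. The paper itself does not supply an argument here; it simply cites \cite[Lemma 2.5]{CKW} for a proof, so there is no in-paper proof to compare against. Your argument is the natural self-contained one: rewrite $\theta_X$ using homogeneity as $\dim_k\Hom_A(X,-) - \dim_k\Hom_A(-,X)$ (the second $\Hom$ in the paper's displayed definition is an evident typo of $\Hom_k$ for $\Hom_A$), note that $\theta_X([X])=0$, and then for a nonzero proper submodule $Y$ observe both that the inclusion witnesses $\Hom_A(Y,X)\neq 0$, and that $\Hom_A(X,Y)=0$ because post-composing any $f\colon X\to Y$ with the inclusion gives a non-surjective endomorphism of the brick $X$, hence the zero map. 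Everything checks out, and your closing remark is accurate: homogeneity is used only to eliminate the translate from the formula, after which the brick property does all the work.
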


\subsection{brick-Brauer-Thrall conjectures}\label{Subsection:bBT Conjs}

As briefly explained below, the algebraic and geometric behaviours of bricks play a central role in our recent studies. Some of these problems can be seen as the modern analogues of the celebrated Brauer-Thrall (BT) conjectures and, consequently, we refer to them as the \emph{brick-Brauer-Thrall} (bBT) conjectures. Here we collect some open conjectures that closely relate to our recent studies and current work. For details, historical remarks, and more refined versions of the classical BT conjectures (now theorems), we refer to \cite{Bo}.

\begin{conjecture}[Second brick-Brauer-Thrall Conjecture -- 2nd bBT]\label{Conj: 2ndbBT}
An algebra $A$ is brick-finite if and only if $\brick(A,d)$ is a finite set, for all $d \in \mathbb{Z}_{>0}$.
\end{conjecture}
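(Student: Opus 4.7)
The forward direction is immediate: a finite $\brick(A)$ decomposes as $\bigcup_d \brick(A,d)$, so each $\brick(A,d)$ is finite. The substantive direction is the converse, which I would attack by contrapositive: assume $A$ is brick-infinite and produce a single dimension $d$ for which $\brick(A,d)$ is infinite. Since $\brick(A) = \bigcup_d \brick(A,d)$, if every $\brick(A,d)$ were finite the dimensions of bricks in $A$ would necessarily be unbounded, so any hypothetical counterexample admits bricks of arbitrarily large dimension. The plan is to derive a contradiction by forcing a bounded-dimension infinite semibrick out of this unboundedness.

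The first technical step I would set up is Theorem~A read contrapositively: if $A$ admits an infinite Hom-orthogonal set of modules all of dimension at most some fixed $d'$, then Theorem~A produces an infinite semibrick of bricks of dimension at most $d'$, and pigeonholing over the finitely many possible dimensions $\le d'$ yields an infinite $\brick(A,d)$ for some $d\le d'$, which is the desired conclusion. The task therefore reduces to producing such a bounded-dimension infinite Hom-orthogonal family from the sole hypothesis of brick-infiniteness.

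For this reduction I would pursue two complementary strategies. The first is geometric, via Theorem~B: if some irreducible component $\mathcal{Z}\in \Irr(A)$ contains a pair of orthogonal orbits, then $A$ already admits infinitely many bricks of the same dimension. So the subtask is to show that brick-infiniteness forces some $\mathcal{Z}$ to carry two orthogonal orbits. One would approach this by analyzing the generic modules of irreducible components as the dimension vector varies, examining the $\tau$-rigidity locus, and invoking Proposition~\ref{Prop: bricks with pd=1} to convert non-open brick orbits of projective dimension at most one into infinite families in a fixed dimension. The second strategy is Auslander-Reiten theoretic: if some infinite component of $\Gamma_A$ is generalized standard, Theorem~C already concludes, so the remaining case is that no infinite component of $\Gamma_A$ is generalized standard; one would then try to manufacture bounded-dimension Hom-orthogonal families from transversal slices to such non-standard components by exploiting the structure of almost split sequences in them.

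The main obstacle is precisely this last step: extracting, from a potentially chaotic supply of bricks of unbounded dimension, either a Hom-orthogonal family bounded in dimension or an irreducible component with two orthogonal orbits. The known partial results — preprojective components in \cite{CKW}, algebras with a generalized standard component (Theorem~C), and the various classes treated in \cite{Mo1,Mo2,MP1,MP2,MP3,STV,Pf1,Pf2} — each carry out this bounded-dimension extraction by means specific to their setting. In full generality this step is in fact equivalent, via Theorems~A and~B together with the list of equivalences collected at the end of the introduction, to the conjecture itself, which is why I would expect genuine progress to come either from a sharper understanding of the $\tau$-rigid locus inside irreducible components at the generic point, or from a tight control of how infinite families of indecomposables in $\Gamma_A$ can interact with semibricks.
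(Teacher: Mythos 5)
This statement is an open conjecture, not a theorem of the paper: nowhere does the paper claim a proof of the 2nd bBT in full generality, and Section~\ref{Subsection:bBT Conjs} explicitly lists it among unresolved problems, with partial verification only for special classes (biserial algebras, algebras with generalized standard components via Theorem~C, etc.). So there is no ``paper's own proof'' against which to check you.

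That said, your write-up is honest and essentially correctly calibrated: you correctly dispose of the trivial direction, you correctly reformulate the converse as the need to extract a bounded-dimension infinite Hom-orthogonal family from brick-infiniteness, and you correctly route the reduction through Theorem~A (producing a semibrick of bounded dimension) followed by pigeonhole, and through Theorem~B (two orthogonal orbits in one component) and Theorem~C (generalized standard components). Crucially, you flag at the end that the remaining extraction step ``is in fact equivalent... to the conjecture itself,'' which is exactly right: Theorems~A and~B collapse the conjecture to the claim that brick-infinite implies the existence of a brick component with two orthogonal orbits, and that claim is not established anywhere. The one place your outline is slightly too optimistic is the sentence about ``analyzing the generic modules... examining the $\tau$-rigidity locus, and invoking Proposition~\ref{Prop: bricks with pd=1}''; that proposition requires $\pd_A(M)\leq 1$, a hypothesis that fails for general bricks and general algebras, so it cannot close the argument without further assumptions. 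In short: this is not a proof and cannot be one with current tools, and you have not pretended otherwise; the value of your proposal lies in correctly identifying where the genuine gap sits, which agrees with the paper's own framing of the problem.
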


The above conjecture first appeared in 2019, in the preprint of \cite{Mo2}, and it was settled for some families of algebras (also see \cite[Chapter 6]{Mo1}). In his PhD dissertation, the first-named author originally treated the $2nd$ bBT as a conceptual linkage between the study of $\tau$-tilting finite algebras in \cite{DIJ} and some algebro-geometric properties of Schur representations in \cite{CKW}. This conjecture also later appeared in \cite{STV} and was settled for special biserial algebras. In our previous work, we introduced a systematic treatment of the $2nd$ bBT via $\tau$-tilting theory and showed a reduction theorem \cite{MP3}, proved a stronger version of this conjecture over all biserial algebras \cite{MP1}, and further studied the $2nd$ bBT conjecture for some other families of tame algebras \cite{MP2}. In \cite{Pf1}, the author has recently verified the $2nd$ bBT conjecture for some new families of algebras.

\medskip

Another conjecture that closely relates to the scope of this work is in terms of Hom-orthogonal bricks. Following \cite{As}, a set of Hom-orthogonal bricks in $\modu A$ is called a \emph{semibrick} of $A$. To our knowledge, the following conjecture was first stated in \cite[Conjecture 5.12]{En1}, and it was originally motivated by the study of some extension-closed subcategories. As discussed in the following sections, this conjecture follows from the $2nd$ bBT conjecture (see Remark \ref{Rem:on semibrick conjecture}). 

\begin{conjecture}[Semibrick Conjecture]\label{Conj:Semibrick Conjecture}
Every brick-infinite algebra admits an infintie semibrick.
\end{conjecture}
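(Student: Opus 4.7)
The plan is to derive the Semibrick Conjecture from the $2nd$ brick-Brauer-Thrall Conjecture combined with the geometric machinery of Theorems A and B, following the chain of implications $(1) \Rightarrow (2)$ collected at the end of the introduction. Accepting the $2nd$ bBT, the task reduces to showing that infinitely many bricks of a common dimension can always be refined to an infinite Hom-orthogonal subfamily.

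First, I would invoke the $2nd$ bBT to fix some $d \in \mathbb{Z}_{>0}$ with $\brick(A, d)$ infinite. By Theorem B, some component $\Z \in \Irr(A, d)$ then contains a pair of orthogonal orbits. Next I would inductively extract an infinite Hom-orthogonal family of modules inside $\Z$: given pairwise Hom-orthogonal modules $B_1, \dots, B_m \in \Z$, the locus
\[
U_m := \{Y \in \Z \,:\, \Hom_A(B_i, Y) = \Hom_A(Y, B_i) = 0 \text{ for all } 1 \le i \le m\}
\]
is a finite intersection of open subsets of $\Z$, hence open by upper semi-continuity of Hom-dimensions. The induction hinges on showing that $U_m$ is non-empty at every stage, allowing us to select a new $B_{m+1}$ inside $U_m$. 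The base case $m=1$ is immediate from the existence of the initial orthogonal pair together with irreducibility of $\Z$; for the inductive step, one should pick each $B_j$ from the generic open locus of $\Z$ consisting of those orbits that admit a Hom-orthogonal partner inside $\Z$, so that the accumulating open conditions remain jointly satisfiable. Finally, Theorem A applied to the resulting infinite Hom-orthogonal set $\{B_i\}$ of $d$-dimensional modules yields an infinite semibrick consisting of bricks of dimension at most $d$.

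The main obstacle is plainly the $2nd$ bBT itself, which is open in general; all of the above is predicated on it. A secondary technical hurdle is verifying non-emptiness of $U_m$ at each inductive step, which amounts to ensuring that finitely many Hom-vanishing constraints do not collectively become incompatible -- here the irreducibility of $\Z$ together with a careful choice of each successive brick in the generic locus of orbits with orthogonal partners should suffice, but the argument must be carried out with some care. A proof of the Semibrick Conjecture that does not assume the $2nd$ bBT would additionally need to handle the pathological regime in which $\brick(A, d)$ is finite for every $d$ while $\brick(A)$ is infinite; in that regime Theorem A's extraction is confined to a single representation variety $\rep(A, \underline{d})$ and does not directly apply, so an altogether different mechanism for producing Hom-orthogonal families across varying dimension vectors would be needed.
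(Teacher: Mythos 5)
What you have been asked to "prove" is stated in the paper as an open conjecture, and the paper contains no proof of it. The paper's only treatment is the conditional observation in Remark~\ref{Rem:on semibrick conjecture} that the Semibrick Conjecture follows from the $2nd$ bBT Conjecture via Proposition~\ref{Prop:2nd bBT gives an infinite family of orthogonal bricks}, plus verifications for restricted families of algebras (notably, algebras with a generalized standard AR-component, in Section~\ref{Section: Bricks and generalized standard components}). Your proposal is precisely that conditional reduction, and you correctly flag that it stands or falls with the $2nd$ bBT. You have not, and could not without settling the $2nd$ bBT, proved the statement itself; but this is not a gap on your part, because the paper has no such proof either.

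On the details: your inductive extraction of an infinite Hom-orthogonal family inside a brick component $\Z$ is essentially the argument the paper uses in the proof of Proposition~\ref{Prop:2nd bBT gives an infinite family of orthogonal bricks} (and again in Theorem~\ref{Thm:1-side vanishing Hom of two modules}). The paper pins down non-emptiness of the successive intersections cleanly by first introducing the open set $O \subset \Z \times \Z$ (from the semicontinuity result of \cite{G+}) and the open set $U = \pi_1(O) \cap \pi_2(O)$, and then always choosing the next module inside $U$ itself; since $U \subset X^{\perp_0}\cdot\,$ is guaranteed non-empty for every $X \in U$, and a finite intersection of non-empty opens in an irreducible variety is non-empty, the induction goes through. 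Your phrase "generic open locus of orbits with orthogonal partners" is gesturing at this same $U$, but as written it leaves the inductive non-emptiness check somewhat informal; if you want your sketch to be tight, make the choice of $U$ explicit and note that each new module is selected inside $U_m \cap U$. Your final paragraph correctly identifies the real obstruction to an unconditional proof: if $\brick(A,d)$ were finite for every $d$ while $\brick(A)$ is infinite, the whole geometric argument is confined to a single $\rep(A,\underline{d})$ at a time and gives nothing. That regime is exactly what the $2nd$ bBT conjecture rules out, and why it must be assumed here.
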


The next conjecture is originally inspired by the geometry of the fan induced by $\tau$-tilting theory. As described in \cite{DIJ}, to an algebra $A$, one can associate a polyhedral fan constructed by the $g$-vectors of the indecomposable $\tau$-rigid modules and the shifts of the indecomposable projective modules, which we simply call the \emph{$\tau$-tilting fan} of $A$, and denote it by $\mathcal{F}_A$. 
This is a conceptual analogue of the $g$-vector fan studied in cluster algebras. If $A$ is of rank $n$, then it is shown that $A$ is brick-finite if and only if $\mathcal{F}_A$ is a complete fan in $\mathbb{R}^n$, namely, $\mathcal{F}_A=\mathbb{R}^n$. 
Recently there has been an extensive study of those algebras which are not brick-finite but their $\tau$-tilting fan manifest interesting properties (see \cite{PY}, \cite{AI}, and the references therein).
The following statement, thanks to the characterization of $\tau$-reduced components by Plamondon in \cite{Pl}, is equivalent to a question that was first posed in the work of Demonet \cite{De} and it is open in full generality. There has been some recent work on the connections between the $2nd$ bBT conjecture and the following conjecture (see \cite{MP2} and \cite{Pf2}). 

\begin{conjecture}[Demonet's Conjecture]\label{Conj: Demonet's Conjecture}
Let $A$ be an algebra of rank $n$. If $A$ is brick-infinite, there is a ray in $\mathbb{Q}^n$ that does not belong to the $\tau$-tilting fan of $A$.
\end{conjecture}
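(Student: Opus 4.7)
The plan is to build on the algebro-geometric dictionary between rays in the $\tau$-tilting fan $\mathcal{F}_A$ and $\theta$-stability. Since $A$ is brick-infinite, the characterization recalled in the excerpt gives $\mathcal{F}_A \neq \mathbb{R}^n$, so the complement $\mathbb{R}^n \setminus \mathcal{F}_A$ is non-empty; the subtle point of Demonet's conjecture is to rule out the possibility that this complement consists only of irrational rays, which \emph{a priori} could occur when $A$ has infinitely many $\tau$-rigid modules whose $g$-vector rays densely cover the rational directions.

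First I would try to reduce to a dimension-bounded regime by invoking Conjecture \ref{Conj: 2ndbBT} (known in many families by the authors' prior work) to produce an infinite family $\{X_i\}_{i\in I}$ of pairwise non-isomorphic bricks of a fixed dimension vector $\underline{d}$. By Proposition \ref{Prop: bricks with pd=1} together with a standard count of open orbits per irreducible component of $\rep(A,\underline{d})$, only finitely many of these $X_i$ can be $\tau$-rigid, so almost all have non-open orbit in their irreducible component.

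Next I would produce rational stability parameters: for a homogeneous $X_i$, Lemma \ref{Lem: Homogeneous brick is stable} gives $\theta_{X_i} \in K_0(\proj A)$ with $X_i$ being $\theta_{X_i}$-stable; in the non-homogeneous case I would use the formula $\theta_{X_i}([N]) = \dim_k \Hom_A(P_0,N) - \dim_k \Hom_A(P_1,N)$ attached to a minimal projective presentation $P_1 \to P_0 \to X_i$, which still yields a rational functional vanishing on $\underline{d}$. The goal is to force at least one such $\theta_{X_i}$ to lie outside every cone of $\mathcal{F}_A$: were it in the interior of a maximal cone $C(U,P)$, the wide/torsion data attached to this cone would effectively parametrize $X_i$ by a $\tau$-rigid datum, which after a dimension count should contradict the infiniteness of the family $\{X_i\}$.

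The principal obstacle is the possibility that each individual $\theta_{X_i}$ lands on a wall of $\mathcal{F}_A$ rather than in its complement, with no small rational perturbation pushing it strictly off the fan. Overcoming this requires a rigidity statement on the local wall-and-chamber structure near accumulation points of the fan, equivalently, that the countable union of closed rational cones of $\mathcal{F}_A$ cannot asymptotically cover every rational direction within a solid region of $\mathbb{R}^n$. A more promising route, and the one apparently taken in Theorem~C under the hypothesis of a generalized standard component, is to fabricate $\theta$ directly from an accumulation limit inside a controlled irreducible component rather than reading it off a single brick; replicating such an argument in full generality is where the real difficulty of Demonet's conjecture lies.
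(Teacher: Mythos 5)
The statement you were asked to prove is an \emph{open conjecture}; the paper does not prove it in general, and indeed says explicitly that it ``is open in full generality.'' The paper establishes Demonet's Conjecture only for algebras whose Auslander--Reiten quiver contains a generalized standard component, as the implication $(5)\Rightarrow(4)$ in Corollary \ref{Cor: all bBT conjectures for gen. stan. comp.} (that is, part of Theorem C). Your proposal correctly recognizes both the openness and roughly where the difficulty lies, but as written it is a discussion of obstacles rather than a proof, and cannot be turned into one with the tools currently available.

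Two more specific remarks. First, the opening reduction ``by invoking Conjecture \ref{Conj: 2ndbBT}'' is circular for the general case: the 2nd bBT conjecture is itself open, so you cannot take it as an input when the goal is Demonet's Conjecture for an arbitrary brick-infinite algebra. Second, the ``principal obstacle'' you identify --- that each $\theta_{X_i}$ might land on a wall of $\mathcal{F}_A$ with no rational perturbation pushing it off the fan --- is precisely the crux, but you stop short of the resolution the paper carries out in the special case. There, given a $\theta$-stable brick $X$ without an open orbit, one forms the wall $\Theta_X$ (a rational polyhedral cone of codimension one in the wall-and-chamber structure, following \cite{AI}) and proves that the \emph{relative interior} of $\Theta_X$ avoids the entire $\tau$-tilting fan. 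The argument is by contradiction: if some $g$-vector $\phi$ of a support $\tau$-rigid pair $(T,P)$ lay in the interior of $\Theta_X$, then $X$ would be a simple object in the wide subcategory $T^\perp\cap{}^\perp(\tau T)\cap P^\perp$, which by Jasso's reduction \cite[Theorem 3.8]{Ja} is equivalent to $\modu C$ for $C=\End(U)/\langle T\rangle$; this forces $X$ to be a labeling brick between functorially finite torsion classes, hence to have an open orbit by \cite[Theorem 6.1]{MP3}, contradicting the choice of $X$. Any rational ray in the relative interior of $\Theta_X$ then witnesses the conjecture. Your characterization of the strategy as ``fabricating $\theta$ from an accumulation limit'' is also not quite right: the parameter is read off a concrete brick (Proposition \ref{Prop:standard tubes} builds $\theta_Y$ from a module $Y$ on a coray of the tube, and Corollary \ref{Cor: all bBT conjectures for gen. stan. comp.} builds the wall from a single $\theta$-stable $X$). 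The genuinely hard part, which neither you nor the paper resolves in general, is producing such a $\theta$-stable brick of non-open orbit for an arbitrary brick-infinite algebra --- that is the Stable 2nd bBT Conjecture, also open.
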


The next conjecture that we mention here predates the above conjectures, but closely relates to our studies of bricks. It was originally phrased in a slightly more technical language, and in terms of dimension of the algebras of semi-invariants on irreducible components in $\Irr(A)$ (see \cite[Conjecture 5.4]{CKW}). 
However, for simplicity, below we present it in the language of stable modules. Recall that, for $\theta \in K_0(\proj A)\simeq \mathbb{Z}^n$, a module $M$ in $\modu A$ is \emph{$\theta$-stable} if $\theta(\underline{\dim}M)=0$, and moreover $\theta(\underline{\dim}N)<0$, for each nonzero submodule $N \subsetneq M$.

\begin{conjecture}[Stable brick Conjecture]\label{Conj:CKW}
If $A$ admits an infinite family of bricks of the same dimension, then for some $\theta \in  K_0(\proj A)$, there are infinitely many $\theta$-stable modules of the same dimension.
\end{conjecture}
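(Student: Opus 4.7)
The plan is geometric. Given an infinite family of bricks of a fixed dimension $d$, choose $\underline{d} \in \mathbb{Z}_{\geq 0}^n$ with $|\underline{d}|=d$ realizing this dimension, and observe that since $\rep(A,\underline{d})$ has finitely many irreducible components and bricks form an open subset of each, there must exist a brick component $\Z \in \Irr(A,\underline{d})$ containing infinitely many brick orbits. In particular $c(\Z) \geq 1$, so the brick locus of $\Z$ is a non-empty open subset of positive dimension. The target is to produce a single $\theta \in K_0(\proj A)$ such that infinitely many of these brick orbits consist of $\theta$-stable modules.

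The first step uses Lemma \ref{Lem: Homogeneous brick is stable}: a homogeneous brick $X$ (i.e.\ $\tau X \cong X$) is automatically $\theta_X$-stable for the explicit weight $\theta_X$ built from a minimal projective presentation. The natural reduction is therefore to show that generically in the brick locus of $\Z$ one has $\tau M \cong M$. The idea is to prove that the non-homogeneous bricks in $\Z$ form a constructible subset of strictly smaller dimension, which would leave a dense family of homogeneous bricks parametrized by at least one continuous parameter, each carrying its own stability weight $\theta_M$.

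The main obstacle, and the reason this conjecture is still open in general, is the second step: pinning down a \emph{single} $\theta$ that stabilises infinitely many members of this family simultaneously, since a priori the weights $\theta_M$ vary with $M$. I would attack this using the algebra of semi-invariants on $\Z$, which is $\mathbb{N}$-graded by weights in $K_0(\proj A)$ and whose support is a finitely generated semigroup; one wants to show that this support is non-empty and contains a weight whose semistable locus meets the brick locus of $\Z$ in a dense open set. A complementary strategy is via the wall-and-chamber structure of $A$: an infinite family of bricks of the same dimension produces infinitely many codimension-one walls, and a pigeonhole/compactness argument inside a bounded region of $K_0(\proj A)_{\mathbb{R}}$ should exhibit a single rational face supporting infinitely many of them. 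Either approach is technically delicate because one must control the variation of $\theta_M$ globally over $\Z$, not just fibre-by-fibre.

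Granted such a $\theta$, the conclusion is routine: the $\theta$-stable locus in $\rep(A,\underline{d})$ is $\GL(\underline{d})$-invariant and open, and its non-empty intersection with the (dense) brick locus of $\Z$ therefore meets infinitely many orbits, producing the desired infinite family of $\theta$-stable modules of dimension $d$. This blueprint already succeeds in the two settings proved in the paper and its predecessors: Theorem C yields it whenever $\Gamma_A$ has a generalized standard component (where stable tubes give an explicit uniform $\theta$ via $\theta_M$ for the mouth modules), and \cite{CKW} treats the preprojective case by an analogous mechanism along the $\tau$-orbit structure.
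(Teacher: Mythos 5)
The statement you were asked to prove is in fact stated as an \emph{open conjecture} in the paper (Conjecture~\ref{Conj:CKW}, attributed to Chindris--Kinser--Weyman), and the paper offers no general proof of it. It is only established in special cases: for tame algebras and algebras with a preprojective component by \cite{CKW}, and, new to this paper, for algebras whose Auslander--Reiten quiver has a generalized standard component (Proposition~\ref{Prop:standard tubes} and Corollary~\ref{Cor: all bBT conjectures for gen. stan. comp.}). Your proposal therefore cannot be assessed as a proof, and to your credit you say as much: you explicitly flag that pinning down a single $\theta$ uniform over the family is ``the reason this conjecture is still open in general.'' That diagnosis is exactly right.

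On the content of your sketch: the first reduction you propose (pass to a brick component $\Z$ with $c(\Z)\geq 1$, then try to show the generic brick in $\Z$ is $\tau$-periodic and invoke Lemma~\ref{Lem: Homogeneous brick is stable}) is indeed what the paper does for tame algebras in the proof of Corollary~\ref{Cor:equiv. conditions for tame algebras}, where Crawley-Boevey's Theorem~D guarantees that almost all bricks of a given dimension are homogeneous, so $\theta := \theta_X$ for one of them, together with openness and density of the $\theta$-stable locus in $\Z$, finishes the argument. In the generalized standard case the paper does \emph{not} pass through homogeneity; instead Proposition~\ref{Prop:standard tubes} builds $\theta = \theta_Y$ from a coray $X_r \to \cdots \to X_1$ of mouth modules of a stable tube and checks $\theta$-stability of $X_r$ by hand, then uses $\pd_B X_r \le 1$ and Proposition~\ref{Prop: bricks with pd=1} to get infinitely many $\theta$-stable bricks in the component. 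The hard second step of your blueprint --- a global semi-invariant or wall-and-chamber argument producing one $\theta$ for an arbitrary brick component --- is precisely what is missing from the literature, and neither of your two suggested strategies is worked out here. So the verdict is: no genuine proof exists (in the paper or in your submission), but your outline correctly captures the known partial results and the open obstruction.
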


In \cite{CKW}, where (an equivalence of) the above conjecture originally appears, the authors prove it for all tame algebras (\cite[Theorem 5.1]{CKW}). Moreover, this conjecture is known to hold for any algebra that admits a preprojective component in its Auslander-Reiten quiver (see Section \ref{Section: Bricks and generalized standard components} for more details).

\medskip

Observe that every stable module is a brick, but the converse is not true in general.
Thus, via Conjecture \ref{Conj:CKW}, one can pose a stronger version of the $2nd$ bBT conjecture, as follows.

\begin{conjecture}[Stable Second brick-Brauer-Thrall Conjecture -- Stable 2nd bBT]\label{stable 2nd bBT}
If $A$ is brick-infinite, then there exists some $\theta \in  K_0(\proj A)$ such that there are infinitely many $\theta$-stable modules of the same dimension.
\end{conjecture}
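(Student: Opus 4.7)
The plan is to factor this conjecture through two intermediate statements already isolated in the paper, namely the $2nd$ bBT conjecture (Conjecture \ref{Conj: 2ndbBT}) and the Stable brick Conjecture (Conjecture \ref{Conj:CKW}). Indeed, $2nd$ bBT promotes brick-infiniteness of $A$ to the existence of a dimension $d$ with $\brick(A,d)$ infinite, and the Stable brick Conjecture in turn upgrades infinitely many bricks of a common dimension to infinitely many $\theta$-stable modules of a common dimension for some $\theta \in K_0(\proj A)$. So the first step is to assume (or prove, in the family under consideration) the $2nd$ bBT: there is a brick component $\Z \in \Irr(A,\underline{d})$ whose brick locus is an infinite union of orbits, so that by the equivalences stated after Corollary D one obtains an infinite family of pairwise orthogonal brick orbits inside $\Z$.

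The second step is to pin down a single stability parameter $\theta$ that captures an infinite subfamily of this brick locus. The natural tool is Lemma \ref{Lem: Homogeneous brick is stable}: any homogeneous brick $X$ is automatically $\theta_X$-stable, with $\theta_X$ computed from a minimal projective presentation of $X$. The proposal is therefore to show that the brick locus of $\Z$ contains, generically, an infinite family of homogeneous bricks, or at least an infinite family on which the function $X \mapsto \theta_X$ is constant up to scalar. Concretely, one considers the locally closed subvariety of $\Z$ consisting of bricks satisfying $\tau X \simeq X$; on such a subvariety, each point is $\theta_X$-stable, and one needs semicontinuity of $\theta_X$ in $X$ to conclude that a single $\theta$ stabilises infinitely many of them.

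The main obstacle is that both reductions are themselves open. The first obstruction is structural: brick-infiniteness does not obviously force $\brick(A,d)$ to be infinite for a single $d$, and circumventing this is exactly the $2nd$ bBT conjecture, which the authors can settle only in restricted settings such as biserial algebras or algebras admitting a generalized standard component (Theorem C). The second obstruction is more delicate: even granting an infinite family of bricks in $\Z$, extracting a common $\theta$ that stabilises infinitely many of them requires structural control on $\Z$. In the paper this control is supplied by stable tubes inside generalized standard components, where the homogeneous bricks form a rational curve and $\theta_X$ is generically constant. A plausible route in full generality is to argue that any brick component with infinitely many orbits should either contain such a homogeneous curve or degenerate, via Theorem \ref{Thm: Zwara}, to one; but making this precise without the tube machinery of generalized standard components is precisely where a new idea is needed.
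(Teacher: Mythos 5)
The statement is an open conjecture with no proof in the paper, and your proposal correctly mirrors the paper's own analysis: the Stable $2nd$ bBT factors as the $2nd$ bBT conjecture (Conjecture \ref{Conj: 2ndbBT}) followed by the Stable brick conjecture (Conjecture \ref{Conj:CKW}), both of which remain open in general and are verified in the paper only for restricted families such as algebras admitting a generalized standard component, where stable tubes and Lemma \ref{Lem: Homogeneous brick is stable} supply the needed homogeneous bricks and a common stability parameter $\theta$. You also correctly pinpoint that extracting a single $\theta$ stabilising an infinite subfamily is exactly where the tube machinery is essential and where a genuinely new idea would be required to go beyond the settings of Theorem \ref{Thm: Alg. with generalized standard components}.
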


Over tame algebras, one can use some standard geometric arguments to show that the above conjecture is in fact equivalent to the $2nd$ bBT conjecture. More generally, the \emph{Stable} $2nd$ bBT conjecture is a combination of the $2nd$ bBT conjecture and the \emph{Stable brick} conjecture. We remark that Conjecture \ref{stable 2nd bBT} has already appeared in \cite{Pf2}, where the author considers different variations of the $2nd$ bBT conjecture and their connection with Demonet's conjecture. In particular, it is proved (see \cite[Proposition 5.6]{Pf2}) that the \emph{Stable} $2nd$ bBT conjecture implies the \emph{Demonet's} Conjecture. 
Meanwhile, because each stable module is a brick, evidently the \emph{Stable} $2nd$ bBT conjecture implies the \emph{Stable brick} conjecture and the $2nd$ bBT conjecture, hence, it also implies the \emph{Semibrick} conjecture (see Remark \ref{Rem:on semibrick conjecture}).

\medskip

Finally, we remark that another stronger version of the $2nd$ bBT conjecture is posed in terms of the generic bricks \cite[Conjecture 4.1]{MP1}. Because we will treat the ``generic brick conjecture" in our future work, we do not list it here. 

\subsection{Generalized standard components}\label{Subsection: Generalized standard components}
As before, $A$ is always assumed to be a finite dimensional algebra over an algebraically closed field, and $\Gamma_A$ denotes the Auslander-Reiten quiver of $A$. Here we briefly recall some necessary tools and terminology needed for our purposes. For the rudimentary materials on the Auslander-Reiten quivers, see \cite[Chapter IV.4]{ASS}. For the more advanced results appearing below, we refer to \cite{Sk1, Sk2, MS1, MS2, MS3}, and the references therein.

\medskip

Recall that a component $\mathcal{C}$ of $\Gamma_A$ is said to be \emph{regular} if $\mathcal{C}$ contains neither a projective nor an injective module. 
A regular component $\mathcal{C}$ is a called a \emph{stable tube} of rank $r$ if it is of the form $\mathbb{Z}A_{\infty}/\langle \tau^r \rangle$, for some $r\in \mathbb{Z}_{>0}$, where $\tau$ is the translation in $\mathbb{Z}A_{\infty}$.
From \cite{Zh}, it is known that a regular component $\mathcal{C}$ is a stable tube if and only if it contains an oriented cycle (i.e, a path $X_0 \rightarrow X_1 \rightarrow \cdots \rightarrow X_{m-1} \rightarrow X_m$ in $\mathcal{C}$ with $X_0=X_m$). 
For each component $\mathcal{C}$, the cyclic part of $\mathcal{C}$ is the translation subquiver obtained by removing from $\mathcal{C}$ all acyclic
vertices and the arrows attached to them. In particular, $\mathcal{C}$ is called \emph{almost cyclic} if the cyclic part of $\mathcal{C}$ is cofinite in $\mathcal{C}$.
We recall that a component $\mathcal{C}$ of $\Gamma_A$ is said to be \emph{preprojective} if
$\mathcal{C}$ contains no oriented cycle and each module in $\mathcal{C}$ belongs to the $\tau_A$-orbit of a projective module in $\ind(A)$. The \emph{preinjective} components are defined dually.

\medskip

By $\rad(\modu A)$ we denote the radical of $\modu A$, that is, the ideal in $\modu A$ generated by all non-invertible morphisms between modules in $\ind(A)$. 
Then, $\rad^{\infty}(\modu A)$ is defined as the intersection of all $\rad^m(\modu A)$, for all $m \in \mathbb{Z}_{>0}$. 
It is known that $A$ is representation-finite if and only if $\rad^{\infty}(\modu A)=0$. 
Following \cite{Sk2}, a component $\mathcal{C}$ of $\Gamma_A$ is called \emph{generalized standard} if $\rad^{\infty}(X,Y)=0$, for all $X$ and $Y$ in $\mathcal{C}$.
This generalizes the more technical notion of standard components and obviously the Auslander-Reiten quivers of each representation-finite algebra is generalized standard. 
In fact, as shown in \cite{Li3}, each standard component of an Auslander-Reiten quiver of a finite-dimensional algebra over an algebraically closed field is generalized standard. However, the converse is not true; there exist non-standard Auslander-Reiten quivers for some representation-finite algebras over algebraically closed fields of characteristic $2$ (see \cite{Re}).

\medskip

Below, we collect some key tools and make some observations that are freely used in Section \ref{Section: Bricks and generalized standard components}. 
Let us recall that a component $\mathcal{C}$ is said to be \emph{almost periodic} if almost all $\tau_A$-orbits in $\mathcal{C}$ are periodic.
We also observe that $\mathcal{C}$ is a generalized standard component of $A$ if and only if $\mathcal{C}$ is a generalized standard component of $A/\ann(\mathcal{C})$. Here, $\ann(\mathcal{C})$ denotes the annihilator of $\mathcal{C}$. 

\medskip

\begin{theorem}\cite{Sk2}\label{Thm: on gen. standard components}
Let $\mathcal{C}$ be a generalized standard component of $\Gamma_A$. Then, $\mathcal{C}$ is almost periodic. Moreover,
\begin{enumerate}
    \item if $\mathcal{C}$ is regular, then $\mathcal{C}$ is either a stable tube, or else $\mathcal{C}=\mathbb{Z}\Delta$, where $\Delta$ is a finite acyclic quiver \cite[Corollary 2.5]{Sk2}.
    \item if $\mathcal{C}$ is regular and has no $\tau$-periodic module, then $A/\ann(\mathcal{C})$ is a wild tilted algebra \cite[Corollary 3.3]{Sk2}.
    
\end{enumerate}
\end{theorem}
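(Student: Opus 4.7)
The plan is to follow Skowro\'nski's strategy and handle the three assertions separately, in the order they are stated. For the almost periodicity claim, the key is to convert the vanishing $\rad^\infty(X,Y)=0$ for $X,Y\in\mathcal{C}$ into a combinatorial constraint on the translation quiver $\mathcal{C}$: if infinitely many $\tau$-orbits of $\mathcal{C}$ were non-periodic, then by chaining irreducible morphisms along ever longer paths one can produce nonzero elements of $\rad^\infty(X,Y)$ between modules $X,Y\in\mathcal{C}$, yielding a contradiction with generalized standardness. This bookkeeping uses Auslander--Reiten sequences and the behavior of additive functions (such as the dimension function) along $\tau$-orbits.

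For assertion (1), the regular hypothesis makes $\mathcal{C}$ a stable translation quiver, so Riedtmann's structure theorem presents $\mathcal{C}$ as $\mathbb{Z}\Delta/G$ for an admissible group $G$ and an underlying quiver $\Delta$. The dichotomy is then immediate: if $\mathcal{C}$ contains an oriented cycle, then the cited theorem of Zhang forces $\mathcal{C}$ to be a stable tube; otherwise $G$ is trivial and $\mathcal{C}=\mathbb{Z}\Delta$ for an acyclic $\Delta$. The finiteness of $\Delta$ is then extracted from the almost periodicity established in the first step by identifying $\Delta$ with a section of $\mathcal{C}$ and observing that an infinite section would force infinitely many distinct non-periodic $\tau$-orbits, contradicting almost periodicity.

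For assertion (2), the plan is to use the shape $\mathcal{C}=\mathbb{Z}\Delta$ to produce a complete slice $\Sigma\subseteq \mathcal{C}$, say by fixing a section of $\mathbb{Z}\Delta$. The generalized standardness guarantees that the Hom-spaces between modules in $\Sigma$ mirror those in the hereditary algebra $k\Delta$, so that $T:=\bigoplus_{X\in\Sigma}X$ is a tilting module over $B:=A/\ann(\mathcal{C})$ with $\End_B(T)^{op}\cong k\Delta$; consequently $B$ is a tilted algebra of type $\Delta$. The wildness of $\Delta$ follows from the hypothesis that $\mathcal{C}$ has no $\tau$-periodic module: if $\Delta$ were Dynkin or Euclidean, then $k\Delta$ would admit either a finite Auslander--Reiten quiver or a family of stable tubes, and transporting these through the derived equivalence induced by $T$ would force $\tau$-periodic modules to appear in $\mathcal{C}$, a contradiction.

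The main obstacle lies in the third step: producing a genuinely \emph{complete} slice and verifying the tilting hypotheses requires a careful analysis of how $\ann(\mathcal{C})$ interacts with the Hom-spaces between slice modules, together with precise control over when the slice is sincere as a $B$-module. It is precisely here that the full strength of the generalized standard condition is indispensable, since otherwise Hom-spaces between sections could be distorted by contributions from $\rad^\infty$, invalidating both the tilting property of $T$ and the identification of $\End_B(T)^{op}$ with $k\Delta$.
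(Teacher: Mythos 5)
This statement is quoted by the paper directly from Skowro\'nski's work (with explicit pointers to \cite[Corollary 2.5]{Sk2} and \cite[Corollary 3.3]{Sk2}) and the paper supplies \emph{no proof of its own}; it is treated as background. So there is no in-paper argument for your proposal to be compared against --- the only honest comparison is with Skowro\'nski's original proofs, which lie outside this manuscript.

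That said, your sketch does track the broad contours of those original arguments, but a few points deserve sharpening. First, in deducing finiteness of $\Delta$ in assertion~(1), you implicitly use that the $\tau$-orbits of $\mathbb{Z}\Delta$ are indexed by the vertices of $\Delta$ and that none of them is periodic; almost periodicity then indeed bounds their number, so this part is fine once spelled out. Second, and more substantially, your reasoning for assertion~(2) --- that Dynkin or Euclidean type would ``transport'' $\tau$-periodic modules into $\mathcal{C}$ --- is not quite the mechanism at work. The slice lives in the connecting component of the tilted algebra $B=A/\ann(\mathcal{C})$, and for $B$ tilted of Dynkin or Euclidean type the connecting component is never regular: it necessarily meets projectives or injectives (or is finite, in the Dynkin case). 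Equivalently, only wild hereditary algebras admit regular tilting modules, which is exactly what the regularity of $\mathcal{C}$ forces. The stable tubes of a Euclidean-type algebra do not migrate into the connecting component under the tilting functor, so the contradiction you are after does not arise from transporting periodic modules into $\mathcal{C}$, but from the structural impossibility of a regular connecting component in the tame and representation-finite cases. Finally, the claim that generalized standardness alone lets you identify $\End_B(T)^{\rm op}$ with $k\Delta$ is correct in spirit but conceals the work of showing that the slice is indeed complete, faithful, and without self-extensions; you flag this yourself as the main obstacle, and it is precisely where the proof ceases to be a sketch and requires the detailed analysis of \cite{Sk2}.
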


The above facts imply that all regular generalized standard components of a tame algebra must be stable tubes.
In fact, for arbitrary algebras, the next theorem shows that almost all generalized standard components are stable tubes.

\begin{theorem}\label{Thm Sk1: almost all gen. stand. comp. are tubes}
Let $A$ be an algebra of rank $n$.
\begin{enumerate}
    \item The number of generalized standard components in $\Gamma_A$ which are not stable tube is finite \cite[Theorem 3.6]{Sk2}. 
    \item Let $\mathcal{C}$ be a regular generalized standard component in $\Gamma_A$. If $A$ is not strictly wild, then $\mathcal{C}$ is a stable tube of rank $\leq n$ \cite[Corollaries 3.9 $\&$ 5.11]{Sk2}.
\end{enumerate}
    
\end{theorem}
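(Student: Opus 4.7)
The plan is to treat the two parts separately, both relying on the structural dichotomy in Theorem \ref{Thm: on gen. standard components} that every regular generalized standard component is either a stable tube or of the form $\mathbb{Z}\Delta$ for some finite acyclic quiver $\Delta$.

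For part (1), I would first reduce to the regular setting: every non-regular component of $\Gamma_A$ contains at least one indecomposable projective or injective module, and since there are only $n$ of each up to isomorphism, at most $2n$ components of $\Gamma_A$ are non-regular, whether or not they are generalized standard. It remains to bound the regular generalized standard components that fail to be stable tubes. By Theorem \ref{Thm: on gen. standard components}(1) such a component has the shape $\mathbb{Z}\Delta$, and by Theorem \ref{Thm: on gen. standard components}(2) it produces a wild tilted quotient $A/\ann(\mathcal{C})$. The core of the argument is then to show that distinct such components give distinct annihilators in $A$, and that a fixed finite-dimensional algebra admits only finitely many quotients realizing a wild tilted algebra with a compatible generalized standard $\mathbb{Z}\Delta$-type component. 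This finiteness would follow from the observation that each such quotient is determined by a slice inside $\mathcal{C}$ whose $n$ vertices correspond to the summands of a tilting module, and that such slice modules have uniformly bounded dimension (in terms of $\dim_k A$), leaving only finitely many combinatorial possibilities.

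For part (2), the argument again starts from the dichotomy of Theorem \ref{Thm: on gen. standard components}(1). If the regular generalized standard component $\mathcal{C}$ were not a stable tube, it would be of the form $\mathbb{Z}\Delta$, and then Theorem \ref{Thm: on gen. standard components}(2) would give that $A/\ann(\mathcal{C})$ is wild tilted. Strict wildness is preserved under passing to a quotient via the full embedding $\modu(A/I)\hookrightarrow \modu A$, and wild tilted algebras are strictly wild, so $A$ would itself be strictly wild, contradicting the hypothesis. Hence $\mathcal{C}$ is a stable tube. For the rank bound $r \leq n$, I would use that the $r$ mouth modules of a stable tube are pairwise non-isomorphic bricks, and the vanishing $\rad^{\infty}(X,Y)=0$ on a generalized standard $\mathcal{C}$ forces any morphism between mouth modules to be a finite composition of irreducible morphisms inside $\mathcal{C}$; since no such path exists between distinct mouth modules of a tube, these $r$ bricks form a semibrick. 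The rank of the Grothendieck group of the support quotient $A/\ann(\mathcal{C})$ is at most $n$, and the $\tau$-periodic structure of the mouth of a generalized standard tube yields linear independence of the corresponding classes, giving $r \leq n$.

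The hardest step is the finiteness count in part (1): controlling the number of wild tilted quotients of $A$ that host a generalized standard $\mathbb{Z}\Delta$-component. Although each such component individually yields a wild tilted quotient, uncontrollably many such quotients could a priori coexist, and the argument must exploit both the combinatorial rigidity of slices and the bound $\dim_k A < \infty$ in a delicate way. The rank-bound piece of part (2) is also subtler than it may appear, because pairwise Hom-orthogonal bricks in a general module category need not have linearly independent dimension vectors — for instance, over the tame Kronecker algebra the homogeneous bricks are Hom-orthogonal yet span only a rank-one sublattice — so the additional rigidity of a generalized standard tube, in particular the stratification by $\tau$-orbits and the vanishing of $\rad^{\infty}$, is essential to the bound and must be carefully exploited.
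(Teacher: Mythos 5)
The paper does not prove this statement; it is quoted directly from Skowro\'nski \cite[Theorem 3.6, Corollaries 3.9 and 5.11]{Sk2} and used as a black box, so there is no internal proof to compare against, and your reconstruction has to stand on its own. Your overall skeleton --- reduce to regular components, apply the dichotomy of Theorem~\ref{Thm: on gen. standard components}, and run the chain ``$\mathbb{Z}\Delta$-component $\Rightarrow$ wild tilted quotient $\Rightarrow$ strictly wild'' --- does match the route Skowro\'nski takes for the qualitative dichotomy, but both parts rest on gaps that you flag yourself, and those gaps carry essentially all of the technical content.

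For part (1): the observation that distinct regular $\mathbb{Z}\Delta$-components have distinct annihilators is sound (a wild tilted algebra has a unique connecting component), but distinct annihilators do not by themselves give finiteness, since a finite dimensional algebra can have infinitely many two-sided ideals. Your bridge across this is the claim that slice modules have dimension uniformly bounded in $\dim_k A$; this is not proved, is not obviously true (the natural bound on a slice module of a tilted algebra $\End_H(T)$ is in terms of $\dim_k T$, which is not controlled by $\dim_k A$), and even if true would not by itself yield finitely many isomorphism classes of possible slices. The actual argument in \cite{Sk2} goes through a careful analysis of directing modules that your sketch bypasses. For part (2): the step from ``wild tilted'' to ``strictly wild'' is itself a nontrivial theorem and is used without justification or citation; and the rank bound $r \le n$ rests on linear independence of $[E_1],\dots,[E_r]$ in $K_0(A/\ann(\mathcal{C}))$, a genuine property of faithful generalized standard stable tubes that you explicitly leave open after correctly observing that pairwise Hom-orthogonality alone does not give it. As written, the proposal is a plausible outline in which the two hardest steps are named but not filled in.
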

A stable tube in $\Gamma_A$ is known to be generalized standard if and only if its mouth consists of pairwise Hom-orthogonal bricks (\cite[Lemma 1.3]{Sk1}). Moreover, a regular component $\mathcal{C}$ of $\Gamma_A$ is a generalized standard stable tube if and only if $\mathcal{C}$ contains a $\tau_A$-orbit consisting of Hom-orthogonal bricks (\cite[Corollary 5.5]{Sk2}).

\medskip

Starting from an algebra $B$ and a family of infinite generalized standard components in $\Gamma_B$, as in \cite{MS1} one can iteratively apply a sequence of 10 admissible operations to get new translation quivers.
For each admissible operation on the translation quiver, there is a corresponding admissible operation on the algebra under consideration (for these operations on translation quivers and algebras, see \cite[Section 2]{MS1}).
A connected translation quiver $\mathcal{C}$ is called a \emph{generalized multicoil} if it can be obtained from a finite family $\{\mathcal{T}_1,\ldots, \mathcal{T}_r\}$ of stable tubes via an iterated application of the admissible translation operations. 
We remark that for any generalized multicoil $\mathcal{C}$, the cyclic part of $\mathcal{C}$ is infinite, connected and cofinite in $\mathcal{C}$. Thus, any generalized multicoil is a connected almost cyclic translation quiver.

\medskip

Among generalized multicoils, a special attention is given to those obtained from pairwise Hom-orthogonal tubes. In particular, let $B$ be an algebra and $\{\mathcal{T}_i\}$ a family of pairwise Hom-orthogonal generalized standard stable tubes in $\Gamma_B$. Then, an algebra $\Lambda$ is said to be a \emph{generalized multicoil enlargement of $B$} using modules from $\{\mathcal{T}_i\}$ if $\Lambda$ is obtained from $B$ via an iteration of the admissible algebra operations which correspond to those admissible translation operations performed either on stable tubes of $\{\mathcal{T}_i\}$, or on the generalized multicoils obtained from stable tubes of $\{\mathcal{T}_i\}$ by means of operations done so far.
The following theorem describes the connection between the generalized multicoils, as translation quivers, and the generalized multicoil enlargements, as algebras.

\begin{theorem}\cite[Theorem 3.1]{MS2}
Let $A$ be an algebra, $\mathcal{C}$ be a component of $\Gamma_A$, and set $\Lambda=A/ \ann(\mathcal{C})$. Then the following statements are equivalent:

\begin{enumerate}
    \item $\mathcal{C}$ is a generalized standard component and a generalized multicoil.
    \item $\Lambda$ is a generalized multicoil enlargement of an algebra $B$ using modules from a generalized standard family $\{\mathcal{T}_i\}$ of stable tubes of $\Gamma_B$, and $\mathcal{C}$ is the generalized standard multicoil obtained from $\{\mathcal{T}_i\}$ by the admissible operations leading from $B$ to $\Lambda$.
\end{enumerate}

\end{theorem}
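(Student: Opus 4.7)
The plan is to prove the two implications separately: (2) $\Rightarrow$ (1) by induction on the sequence of admissible algebra operations used to build $\Lambda$ from $B$, and (1) $\Rightarrow$ (2) by a reverse-engineering argument that peels off admissible operations one at a time.

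For (2) $\Rightarrow$ (1), the base case is $\Lambda = B$, in which $\mathcal{C}$ is one of the stable tubes $\mathcal{T}_i$; this is by hypothesis generalized standard and, as a stable tube, a (degenerate) generalized multicoil. For the inductive step, it suffices to verify that each of the ten admissible algebra operations of \cite[Section 2]{MS1} has two properties: (i) it transforms the corresponding Auslander-Reiten component exactly by the matching admissible translation-quiver operation on the multicoil; and (ii) it preserves generalized standardness. Property (i) is a direct, if lengthy, case analysis carried out using the structure of one-point extensions and coextensions and the explicit description of almost split sequences around the pivot modules. Property (ii) reduces, via the characterization of generalized standardness by the vanishing of $\rad^\infty$ on pairs of indecomposables, to checking that no new infinite-radical morphisms appear inside the enlarged component; this uses the Hom-orthogonality of the initial family $\{\mathcal{T}_i\}$ together with the local description of the modules newly inserted by the operation.

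For (1) $\Rightarrow$ (2), since $\mathcal{C}$ is a generalized multicoil there is, by definition, some sequence of admissible translation-quiver operations producing $\mathcal{C}$ from a family $\{\mathcal{T}_i\}$ of stable tubes; the task is to lift this combinatorial decomposition to an algebraic one. I would argue by reverse induction on the length of that sequence. At each step one identifies a pivot module $X \in \mathcal{C}$ on which the last admissible operation was performed and uses $X$ to realize $\Lambda$ as a one-point extension (or coextension, gluing, etc.) of a smaller algebra $\Lambda_0$. The corresponding component $\mathcal{C}_0$ of $\Gamma_{\Lambda_0}$ is then a generalized multicoil using one fewer admissible operation, and its generalized standardness is inherited from that of $\mathcal{C}$ via the fact that the restriction functor $\modu \Lambda \to \modu \Lambda_0$ preserves the vanishing of $\rad^\infty$ on the relevant pairs. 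The induction hypothesis applied to $(\Lambda_0,\mathcal{C}_0)$ yields the algebra $B$ and the family $\{\mathcal{T}_i\}$, and reapplying the last operation recovers $(\Lambda,\mathcal{C})$; the base of the induction (no operations remain) uses Theorem \ref{Thm: on gen. standard components} and \cite[Lemma 1.3]{Sk1} to check that the surviving stable tubes form a generalized standard family.

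The hard part is the reverse-engineering step in (1) $\Rightarrow$ (2): the decomposition of $\mathcal{C}$ as a generalized multicoil is not canonical, and there is no a priori reason that a chosen combinatorial pivot $X$ should lift to a valid pivot for an algebra operation on $\Lambda$, since the annihilator $\ann(\mathcal{C})$ must interact compatibly with $X$, with the almost split sequences through $X$, and with the whole subquiver inserted by the operation. Showing that a valid algebraic pivot always exists, and that the resulting smaller algebra $\Lambda_0$ has exactly the category-theoretic interpretation required to match the remaining combinatorial operations, is where most of the genuine work lies; the passage to $\Lambda = A/\ann(\mathcal{C})$ (rather than $A$ itself) is what makes this identification possible, since it forces the module category of $\Lambda$ to faithfully capture the combinatorics of $\mathcal{C}$.
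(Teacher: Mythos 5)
The paper does not prove this statement: it is quoted verbatim from \cite[Theorem 3.1]{MS2} as background material, and no argument for it appears in the text you were asked to compare against. There is therefore no in-paper proof for your sketch to be checked against; the only meaningful comparison is with the external source.

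As a reconstruction of that external proof, your outline has the right overall shape for $(2)\Rightarrow(1)$ (induction over the sequence of admissible operations, verifying at each step that the algebra operation effects the corresponding translation-quiver operation and preserves generalized standardness), and you correctly identify $(1)\Rightarrow(2)$ as the hard direction. However, the sketch understates how much of the work is already packaged in \cite{MS1}. The implication $(1)\Rightarrow(2)$ does not need to be established by raw reverse-engineering from an arbitrary combinatorial multicoil decomposition: a generalized multicoil is almost cyclic and coherent, and \cite{MS1} already classifies the algebras whose Auslander--Reiten quiver admits a separating family of almost cyclic coherent components as precisely the iterated generalized multicoil enlargements of suitable base algebras with faithful families of pairwise orthogonal generalized standard stable tubes. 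Your ``peel off a pivot, one operation at a time'' step is essentially a re-derivation of that structure theorem, and the difficulties you flag --- non-canonicity of the combinatorial decomposition, and compatibility of $\ann(\mathcal{C})$ with the chosen pivot and the inserted subquiver --- are exactly the points that \cite{MS1} resolves. A tighter argument would reduce the generalized-standardness hypothesis on $\mathcal{C}$ in $\Gamma_\Lambda$ to the hypotheses of \cite{MS1}'s classification (using that passing to $\Lambda = A/\ann(\mathcal{C})$ makes $\mathcal{C}$ faithful) and then read off $B$ and $\{\mathcal{T}_i\}$ from the conclusion, rather than rebuilding that machinery. One further small point: in your base case you should make explicit that $\mathcal{C}$ being a single stable tube forces the multicoil to be built from a one-element family $\{\mathcal{T}_1\}$; as stated, ``one of the $\mathcal{T}_i$'' is not quite the assertion that $\mathcal{C}$ is the multicoil obtained from the \emph{entire} family $\{\mathcal{T}_i\}$ as required by condition $(2)$.
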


\medskip

As shown in \cite[Theorem 1.2]{MS3}, for an infinite generalized standard component $\mathcal{C}$ in $\Gamma_A$, one can consider three quotient algebras of $A$, denoted by $A^{(lt)}_{\mathcal{C}}$, $A^{(rt)}_{\mathcal{C}}$, and $A^{(c)}_{\mathcal{C}}$, respectively called the \emph{left tilted} algebra of $\mathcal{C}$, the \emph{right tilted} algebra of $\mathcal{C}$, and the \emph{coherent} algebra of $\mathcal{C}$. In particular, each one of $A^{(lt)}_{\mathcal{C}}$ and $A^{(rt)}_{\mathcal{C}}$ is a finite product of some tilted algebras. Moreover, $A^{(c)}_{\mathcal{C}}=A^{(c)}_{1}\times \cdots \times A^{(c)}_{p}$, where, for each $1\leq i \leq p$, the algebra $A^{(c)}_{i}$ is a generalized multicoil enlargement of an algebra $B^{(c)}_i$ with a faithful family of pairwise orthogonal generalized standard stable tubes in $\Gamma_{B^{(c)}_i}$ (for details, see \cite[Sections 2 $\&$ 3]{MS3}). As discussed in Section \ref{Section: Bricks and generalized standard components}, this important result allows us to employ several reduction techniques in the study of bricks over those algebras which admit a generalized standard component.

\section{Hom-orthogonal modules and brick-finiteness}\label{Section: Hom-orthogonal modules and brick-finiteness}
Following the same assumptions and notations introduced before, in the following $A$ always denotes a basic, connected, unital associative algebra of rank $n$, over an algebraically closed field $k$.
Moreover, for a collection of modules $\mathcal{X}$ in $\modu A$, by $\supp(\mathcal{X})$ we denote the support of $\mathcal{X}$, that is, the set of simple modules in $\modu A$, considered up to isomorphism, which appear as the composition factor of some $X \in \mathcal{X}$. For every such $\mathcal{X}$, we obviously have $\#\supp(\mathcal{X})\leq n$.

\medskip

Our first result gives a small upper bound on the size of Hom-orthogonal sets of modules over any brick-finite algebra. This also implies part $(1)$ of Theorem A.

\begin{proposition}
\label{Prop:brick-fin and Hom-orthogonality}
Let $A$ be a brick-finite algebra of rank $n$, and $\mathcal{X}$ be a collection of modules in $\modu A$. If $\mathcal{X}$ is Hom-orthogonal, then $\mathcal{X}$ is of size of at most $\#\supp(\mathcal{X})$. In particular, each subset $\{X_1,\dots, X_{n+1}\}$ of $\ind(A)$ is not Hom-orthogonal.  
\end{proposition}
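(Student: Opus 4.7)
The plan is threefold: (i) replace the Hom-orthogonal set $\mathcal{X}$ by a semibrick of the same cardinality, using Theorem A's extraction statement; (ii) pass to a quotient algebra whose rank equals $\#\supp(\mathcal{X})$; and (iii) invoke the known bound on the size of a semibrick over a brick-finite algebra. First, applying the extraction part of Theorem A to $\mathcal{X}$ yields a semibrick $\mathcal{S}_b=\{B_i\}_{i\in I}$ with $|\mathcal{S}_b|=|\mathcal{X}|$ and each $B_i$ a subquotient of $X_i$. Since the composition factors of $B_i$ are among those of $X_i$, we have $\supp(\mathcal{S}_b)\subseteq\supp(\mathcal{X})$, so it suffices to bound $|\mathcal{S}_b|$.

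Next, set $m=\#\supp(\mathcal{X})$ and let $J\subseteq A$ be the two-sided ideal generated by the primitive idempotents $e_j$ for which $S_j\notin\supp(\mathcal{X})$. The quotient $A':=A/J$ is a basic finite-dimensional $k$-algebra of rank $m$, and $\modu A'$ is precisely the full subcategory of $\modu A$ of modules supported on $\supp(\mathcal{X})$. Hence $\mathcal{S}_b\subseteq\modu A'$ is a semibrick of $A'$. Because $\End_{A'}(M)=\End_A(M)$ for every $M\in\modu A'$, every brick of $A'$ is a brick of $A$, so brick-finiteness of $A$ descends to $A'$.

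Finally, I would show that a semibrick over the brick-finite algebra $A'$ of rank $m$ has size at most $m$. By the Demonet--Iyama--Jasso equivalence \cite{DIJ}, $A'$ is $\tau$-tilting finite; and by Asai's bijection \cite{As} between semibricks and support $\tau$-tilting modules in the $\tau$-tilting finite regime, combined with the fact that each support $\tau$-tilting module has at most as many indecomposable summands as the rank, every semibrick of $A'$ has size at most $m$. Applying this to $\mathcal{S}_b$ yields $|\mathcal{X}|=|\mathcal{S}_b|\leq m=\#\supp(\mathcal{X})$, which is the first claim. The ``in particular'' assertion follows instantly: a Hom-orthogonal family $\{X_1,\dots,X_{n+1}\}\subseteq\ind(A)$ would force $\#\supp\geq n+1>n$, a contradiction.

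The main obstacle is step (iii) -- the rank-$m$ bound on semibricks over brick-finite algebras -- which rests on the $\tau$-tilting-finite/semibrick machinery of Demonet--Iyama--Jasso and Asai; once that is available, steps (i) and (ii) are routine. An alternative route that sidesteps $\tau$-tilting theory is to argue that the filtration closure $\filt(\mathcal{S}_b)$ is a wide subcategory equivalent to $\modu B$ for a basic algebra $B$ of rank $|\mathcal{S}_b|$, observe that $B$ inherits brick-finiteness, and iterate on the $B$-side until a contradiction with the support bound is reached; this recasts the same bound in the language of wide subcategories.
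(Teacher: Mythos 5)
Your proof follows essentially the same structure as the paper's: extract a semibrick of the same cardinality, restrict to a quotient algebra of rank $\#\supp(\mathcal{X})$, and invoke the bound on the size of semibricks over brick-finite algebras via the Asai correspondence. The one logical concern is in step (i): you cite ``Theorem A's extraction statement,'' but Theorem A is not an input to Proposition~\ref{Prop:brick-fin and Hom-orthogonality} --- the extraction is precisely the first half of the paper's own proof of this Proposition, and Theorem A is assembled afterward from it. So invoking Theorem A here is circular; you should instead prove the extraction directly. The construction is short: for each $X_i$, choose a nonzero endomorphism $f_i \in \End_A(X_i)$ whose image $B_i := \Image(f_i)$ has minimal dimension; then any nonzero $g \in \End_A(B_i)$ lifts to an endomorphism of $X_i$ with image inside $g(B_i) \subseteq B_i$, so minimality forces $g$ to be an isomorphism, whence $B_i$ is a brick (over an algebraically closed field). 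Pairwise Hom-orthogonality of the $B_i$ follows because any nonzero map $B_i \to B_j$ would extend to a nonzero map $X_i \twoheadrightarrow B_i \to B_j \hookrightarrow X_j$, contradicting $\Hom_A(X_i,X_j)=0$; in particular the $B_i$ are pairwise non-isomorphic, so the cardinality is preserved.

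On step (iii), your route through DIJ ($\tau$-tilting finiteness) and Asai's bijection between semibricks and support $\tau$-tilting modules is a perfectly valid way to reach the rank bound; the paper cites \cite[Corollary~2.10]{As} directly, which packages exactly that bound in terms of left-finite semibricks (and brick-finiteness makes every semibrick left-finite). The two formulations are essentially equivalent, so there is no substantial difference. Your choice of ideal $J$ (generated by the complementary idempotents) differs from the paper's choice (the full annihilator of $\mathcal{X}$), but both yield a quotient of rank $\#\supp(\mathcal{X})$ in which $\mathcal{X}$ lives, so either works. The alternative route you sketch via $\filt(\mathcal{S}_b)$ and wide subcategories is sound in spirit but would need the same left-finiteness input to identify the wide subcategory with $\modu B$ and bound its number of simples, so it does not sidestep the Asai machinery in any real sense.
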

\begin{proof}
We first observe that, for an arbitrary algebra $A$ and every Hom-orthogonal set of modules $\{X_i\}_{i\in I}$ in $\modu A$, we can construct a Hom-orthogonal set of bricks $\{B_i\}_{i\in I}$. In particular, for each $i \in I$, put $B_i:=\Image(f_i)$, where $f_i \in \End_A(X_i)$ is a nonzero endomorphism with $\Image(f_i)$ of minimal dimension. Clearly, for distinct $i,j \in I$, the fact that $\Hom(X_i, X_j)=0$ implies that $\Hom(B_i, B_j)=0$. This, in particular, generates a semibrick of size $\#I$.

Under the assumption that $A$ is brick-finite, every wide subcategory in $\modu A$ is left-finite, and therefore every semibrick brick in $\modu A$ is of size at most $n$ (see \cite[Corollary 2.10]{As}).
Hence, every Hom-orthogonal set in $\modu A$ contains at most $n$ nonzero (non-isomorphic) modules.
To finish the proof, observe that $\mathcal{X}$ can be viewed as a Hom-orthogonal set of modules in $\modu A/J$, where $J$ is the annihilator of $\mathcal{X}$ (i.e, $J$ is the largest ideal in $A$ such that $JX=0$, for every $X \in \mathcal{X}$). Obviously $A/J$ is a brick-finite algebra whose rank is $\#\supp(\mathcal{X})$. The above argument implies that if $\mathcal{X}$ is Hom-orthogonal, then its size is at most $\#\supp(\mathcal{X})$.
\end{proof}

\begin{remark}\label{Rem: brick-finite has no more than n orthogonal modules}
Observe that if $A$ is a (connected) algebra of rank $n$, we have $|\brick(A)|\geq n$. In fact, $|\brick(A)|=n$ if and only if $n=1$, which is the case, if and only if $A$ is a local algebra. This is equivalent to saying that, for each pair of non-isomorphic $A$-modules $X_1$ and $X_2$, we have $\Hom_A(X_1,X_2)\neq 0$. Hence, if $A$ is a local algebra, no subset $\{X_1, X_2\}$ of $\ind(A)$ is Hom-orthogonal.
From this viewpoint, Proposition \ref{Prop:brick-fin and Hom-orthogonality} gives a characterization of brick-finite algebras as a generalization of local algebras: If $A$ is a brick-finite algebra of rank $n$, no subset $\{X_1,\dots, X_{n+1}\}$ of $\ind(A)$ is Hom-orthogonal.
\end{remark} 

Some immediate implications of Proposition \ref{Prop:brick-fin and Hom-orthogonality} is collected in the next corollary, which we state without a separate proof.

\begin{corollary}\label{cor:infinite family of orthogonal modules}
Let $A$ be an algebra of rank $n$. Then, 
\begin{enumerate}
    \item If there exists a Hom-orthogonal set of size $n+1$, then $A$ is brick-infinite.
    \item $A$ admits an infinite set of Hom-orthogonal modules in $\modu A$ if and only if $A$ admits an infinite set of Hom-orthogonal bricks.
\end{enumerate}
\end{corollary}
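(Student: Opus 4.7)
The plan is to derive both assertions directly from Proposition \ref{Prop:brick-fin and Hom-orthogonality} and from the brick-refinement construction used in its proof; no new ingredients are needed.

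For part (1), I would argue by contrapositive. Suppose $A$ is brick-finite of rank $n$, and let $\mathcal{X}$ be any Hom-orthogonal set in $\modu A$. Since $\supp(\mathcal{X})$ consists of simple $A$-modules, we have $\#\supp(\mathcal{X}) \leq n$. Proposition \ref{Prop:brick-fin and Hom-orthogonality} then gives $|\mathcal{X}| \leq \#\supp(\mathcal{X}) \leq n$, so no Hom-orthogonal set of size $n+1$ can exist. Contrapositively, the existence of such a set forces $A$ to be brick-infinite.

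For part (2), the ``if'' direction is immediate, since bricks are in particular modules. For the ``only if'' direction, given an infinite Hom-orthogonal set $\{X_i\}_{i \in I}$, I would apply the construction from the proof of Proposition \ref{Prop:brick-fin and Hom-orthogonality}: for each $i \in I$, choose a nonzero $f_i \in \End_A(X_i)$ whose image $B_i := \Image(f_i)$ has minimal dimension, so that $B_i$ is a brick. If $g \colon B_i \to B_j$ were nonzero for some $i \neq j$, the composition $X_i \twoheadrightarrow B_i \xrightarrow{g} B_j \hookrightarrow X_j$ would be a nonzero element of $\Hom_A(X_i, X_j)$, contradicting Hom-orthogonality of $\{X_i\}_{i \in I}$. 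Hence $\{B_i\}_{i \in I}$ is Hom-orthogonal.

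The only point requiring a brief check is that the $B_i$ are pairwise non-isomorphic, so that $\{B_i\}_{i \in I}$ is genuinely an infinite set of bricks; but this is automatic, since an isomorphism $B_i \simeq B_j$ for $i \neq j$ would yield a nonzero element of $\Hom_A(B_i, B_j)$, again contradicting the Hom-orthogonality just established. I do not anticipate any real obstacle here, as the whole argument is an unpacking of the refinement procedure in Proposition \ref{Prop:brick-fin and Hom-orthogonality}.
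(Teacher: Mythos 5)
Your proposal is correct and follows exactly the path the paper intends: part (1) is the contrapositive of the ``in particular'' clause of Proposition~\ref{Prop:brick-fin and Hom-orthogonality}, and part (2) is an unpacking of the brick-refinement construction $B_i := \Image(f_i)$ (with $f_i$ of minimal nonzero image) already carried out in the proof of that proposition. The paper states the corollary ``without a separate proof'' precisely because these are the intended one-line deductions, and your careful check that the $B_i$ are pairwise non-isomorphic is a sensible detail to spell out even if the paper leaves it implicit.
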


It is expected that the converse of part $(1)$ of the above corollary is true in general (see Remark \ref{Rem:on semibrick conjecture}).
Moreover, if the infinite Hom-orthogonal set in part $(2)$ has bounded dimensions, then the same holds for the infinite set of bricks, which would yield the $2nd$ bBT conjecture (Conjecture \ref{Conj: 2ndbBT}).
Note that, if $A$ is brick-infinite, then by the $2nd$ BT conjecture (now theorem), there exists an ascending sequence of integers $0<d_1 < d_2 < \cdots$ such that every $\ind(A,d_i)$ is an infinite set. Thus, to prove the $2nd$ bBT conjecture for a brick-infinite algebra $A$, it suffices to show that one of such infinite families $\ind(A,d_i)$ contains an infinite Hom-orthogonal subset. In Section \ref{Section: Orthogonal orbits and irreducible components} we prove a stronger version of this fact.
\medskip

Through some geometric arguments in representation varieties, in the next proposition we obtain a more refined version of some known results on brick components (compare \cite[Corollary 1.6]{G+}). 
In that paper, the authors have shown that if $\mathcal{Z}$ is a brick component without a dense orbit, then there is an open set in $\mathcal{Z}\times \mathcal{Z}$ consisting of pairs of orthogonal bricks. We note that projecting such an open set down onto $\mathcal{Z}$ does not necessarily yield a set of Hom-orthogonal bricks. That being the case, to obtain the desired set of Hom-orthogonal bricks in $\mathcal{Z}$, some further observations are made in the proof of the following proposition. 
Below, for $X$ in $\modu A$, we set ${X}^{\perp_0}:=\{Y\in \modu A\,|\, \Hom_A(X,Y)=0\}$. We dually define $\,^{\perp_0}{X}$.
Note that, together with Corollary \ref{cor:infinite family of orthogonal modules}, the following proposition immediately implies part $(2)$ of Theorem A.  

\begin{proposition}\label{Prop:2nd bBT gives an infinite family of orthogonal bricks}
For algebra $A$, and each $d \in \mathbb{Z}_{>0}$, the following are equivalent:

\begin{enumerate}
    \item $A$  admits an infinite family of (non-isomorphic) bricks of dimension $d$.
    \item $\brick(A,d)$ contains an infinite set of Hom-orthogonal bricks.
\end{enumerate}
\end{proposition}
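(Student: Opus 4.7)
The direction $(2)\Rightarrow(1)$ is immediate, since any Hom-orthogonal set is pairwise non-isomorphic. For $(1)\Rightarrow(2)$, the plan is to first reduce to a single brick component and then produce a pairwise Hom-orthogonal family there by iterated use of upper semicontinuity of $\dim\Hom$.

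First I would pigeonhole: since only finitely many dimension vectors satisfy $|\underline{d}|=d$ and each $\rep(A,\underline{d})$ has only finitely many irreducible components, the assumption that $\brick(A,d)$ is infinite produces an irreducible component $\mathcal{Z}\in\Irr(A,\underline{d})$ (for some $\underline{d}$ with $|\underline{d}|=d$) containing infinitely many distinct brick orbits. As recorded in Section \ref{Subsection: Rep. varieties and components}, $\mathcal{Z}$ is then a brick component whose brick locus $U\subset\mathcal{Z}$ is open and dense; since every brick orbit has dimension $\dim\GL(\underline{d})-1$, the infinitude of brick orbits forces $c(\mathcal{Z})\geq 1$ and rules out $\mathcal{Z}$ being a single orbit closure.

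The inductive step is clean once two Hom-orthogonal bricks are in hand. Given pairwise Hom-orthogonal bricks $B_1,\ldots,B_k\in U$ with $k\geq 2$, the subsets
\[
V_i:=\{Y\in\mathcal{Z}\,|\,\Hom_A(B_i,Y)=0=\Hom_A(Y,B_i)\}
\]
are open by upper semicontinuity of $\dim\Hom$, and each contains $B_j$ whenever $j\neq i$, so is non-empty. By irreducibility of $\mathcal{Z}$ each $V_i$ is dense, hence so is $V:=V_1\cap\cdots\cap V_k\cap U$. Since $V$ is open of dimension $\dim\mathcal{Z}$ while the finite union $O_{B_1}\cup\cdots\cup O_{B_k}$ has strictly smaller dimension, $V$ contains a brick $B_{k+1}$ not isomorphic to any previous $B_j$, and by construction $B_{k+1}$ is Hom-orthogonal to each $B_j$. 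Iterating produces the desired infinite semibrick in dimension $d$ inside $\mathcal{Z}$.

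The hard part is the base case: producing two Hom-orthogonal bricks in $\mathcal{Z}$, or equivalently, showing that the open set
\[
W:=\{(X,Y)\in\mathcal{Z}\times\mathcal{Z}\,|\,\Hom_A(X,Y)=0=\Hom_A(Y,X)\}
\]
is non-empty. This is essentially a special case of Theorem B applied to $\mathcal{Z}$. If $W$ were empty, then the closed cover $\mathcal{Z}\times\mathcal{Z}=\{\Hom_A(X,Y)\neq 0\}\cup\{\Hom_A(Y,X)\neq 0\}$ together with irreducibility of $\mathcal{Z}\times\mathcal{Z}$ would force one of the two pieces to equal all of $\mathcal{Z}\times\mathcal{Z}$; say every pair $(X,Y)\in\mathcal{Z}\times\mathcal{Z}$ admits a non-zero map $X\to Y$. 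Fixing a brick $X\in U$, and using that $X$ has only finitely many (isomorphism classes of) proper quotients, a pigeonhole on the images of non-zero maps $X\to Y$ as $Y$ ranges over infinitely many brick orbits in $\mathcal{Z}$ produces a fixed proper submodule $Q$ that embeds into infinitely many pairwise non-isomorphic bricks of $\mathcal{Z}$. The anticipated finish is a dimension-theoretic contradiction using $c(\mathcal{Z})\geq 1$ (pulling back along a generic subvariety of $\mathcal{Z}$ on which $Q$ is a subobject of the tautological family). This is the main obstacle; once it is cleared, the induction above completes the proof.
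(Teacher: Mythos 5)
Your direction $(2)\Rightarrow(1)$ and your inductive step are both correct, and the inductive step is essentially the same chain-of-open-sets argument the paper uses. The problem is the base case, and it is a genuine gap, not merely an unfinished detail.

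Your claim that a brick $X$ ``has only finitely many (isomorphism classes of) proper quotients'' is false. Take the Kronecker algebra $k(1\rightrightarrows 2)$ and let $X$ be the indecomposable preprojective of dimension vector $(2,3)$; it is a brick, yet it admits infinitely many pairwise non-isomorphic quotients (for instance, every regular brick of dimension vector $(1,1)$ is a quotient of $X$). So the pigeonhole you invoke does not apply. Worse, even granting the conclusion of that pigeonhole --- a fixed proper submodule $Q$ embedding into infinitely many non-isomorphic bricks of $\mathcal{Z}$ --- there is no contradiction to be had by any dimension count: in the same Kronecker example, the simple $S_2$ is a submodule of every regular brick of dimension vector $(1,1)$, all of which live in a single irreducible component with $c(\mathcal{Z})=1$. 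So the ``anticipated finish'' does not exist, at least not along these lines.

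The paper does not try to establish the base case from scratch. It cites \cite[Corollary 1.6]{G+}, which says precisely what you need: if $\mathcal{Z}$ is a brick component without a dense orbit, then there is a non-empty open subset $O\subseteq\mathcal{Z}\times\mathcal{Z}$ on which $\Hom_A(-,-)$ vanishes. From there the paper projects $O$ to the two factors, intersects, and runs essentially the same descending chain of open subsets $U\supsetneq U_1\supsetneq U_2\supsetneq\cdots$ that you describe to extract the infinite Hom-orthogonal family. So the route is the same once the base case is available, but the base case is a non-elementary geometric input that must be imported rather than re-derived by the quotient-counting argument you sketch.
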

\begin{proof}
We only need to show the implication $(1) \rightarrow (2)$. In particular, our assumption implies that the variety $\modu (A,d)$ contains a brick component $\mathcal{Z}$ with no dense orbit. Hence, from \cite[Corollary 1.6]{G+}, we know that there is a non-empty open subset $O$ of $\mathcal{Z}\times \mathcal{Z}$ such that, for each pair $(M,N)$ in $O$, we have $\Hom_A(M,N)=0$. 
To this open subset $O \subseteq \mathcal{Z}\times \mathcal{Z}$, we associate the subset $U:=\pi_1(O) \cap \pi_2(O)$ in  $\mathcal{Z}$, where $\pi_1, \pi_2: \mathcal{Z}\times \mathcal{Z} \to \mathcal{Z}$ are the two natural projection maps. These projection maps are known to be open, thus $U$ is a non-empty open subset of $\mathcal{Z}$. 

Now, for each $X_1$ in $U$, observe that ${X_1}^{\perp_0}$ and $\,^{\perp_0}{X_1}$ are non-empty open subsets of $U$. Define $U_1:={X_1}^{\perp_0} \cap \,^{\perp_0}{X_1}$, which is non-empty. Obviously $X_1 \notin U_1$, and therefore $U\supsetneq U_1$. Then, for any $X_2$ in $U_1$, we similarly have that ${X_2}^{\perp_0}$ and $\,^{\perp_0}{X_2}$ are non-empty open subsets of $U_1$, and define $U_2:={X_2}^{\perp_0} \cap \,^{\perp_0}{X_2}$.
Repeating this process, we obtain an infinite chain $U \supsetneq U_1 \supsetneq U_2 \supsetneq \cdots$ of non-empty open subsets of $\mathcal{Z}$, which gives us an infinite set $\{X_1, X_2, \cdots \}$ of pairwise Hom-orthogonal modules in $\mathcal{Z}$.
\end{proof}

Before we make some observations on the preceding results, let us remark that Proposition \ref{Prop:2nd bBT gives an infinite family of orthogonal bricks}, together with Proposition \ref{Prop:brick-fin and Hom-orthogonality} and Corollary \ref{cor:infinite family of orthogonal modules}, give a complete proof of Theorem A in Section \ref{Section: Introduction}.

\begin{remark}\label{Rem:on semibrick conjecture}(\emph{Strong Semibrick Conjecture})
The semibrick conjecture asserts that an algebra $A$ is brick-infinite if and only if $A$ has an infinite semibrick (Conjecture \ref{Conj:Semibrick Conjecture}). Meanwhile, Proposition \ref{Prop:2nd bBT gives an infinite family of orthogonal bricks} implies that if the $2nd$ bBT conjecture (Conjecture \ref{Conj: 2ndbBT}) is true, then the semibrick conjecture is immediate.
This leads to a sharper version of the semibrick conjecture, asserting that the following are equivalent:
\begin{enumerate}
    \item $A$ is brick infinite;
    \item $A$ admits an infinite semibrick consisting of bricks of the same dimension.
\end{enumerate}
Although $(2)\rightarrow (1)$ is obvious, the implication $(1)\rightarrow (2)$ is highly non-trivial and is equivalent to the $2nd$ bBT conjecture. That is, the $2nd$ bBT conjecture can be seen as a stronger version of the semibrick conjecrure. 
We note that our results in \cite{MP1, MP2} immediately imply this stronger semibrick conjecture for some families of algebras (also see Corollary \ref{Cor:reduction to min-brick-inf}).
\end{remark}

By a systematic treatment of the $2nd$ bBT conjecture via $\tau$-tilting theory, in \cite{MP3} we reduced this conjecture to a certain family of algebras. Before we state the connections between our new results and this reduction, recall that we say $A$ is \emph{minimal brick-infinite} algebra if $A$ is brick-infinite, but each proper quotient of $A$ is brick-finite. 
We note that if $B$ is a brick-infinite quotient of $A$ and satisfies one of the conjectures from Section \ref{Subsection:bBT Conjs}, then that conjecture immediately holds for $A$.
In particular, to prove the (stable) $2nd$ bBT conjecture for a brick-infinite algebra, it obviously suffices to verify that over one of its minimal brick-infinite quotients. 
Moreover, \cite[Theorem 1.4]{MP3} gives a non-trivial reduction on this open problem: To settle the $2nd$ bBT conjecture in full generality, it is sufficient to verify it over those minimal brick-infinite algebras for which almost all bricks are faithful.
Such reductive arguments can be similarly employed in the study of the other brick-Brauer-Thrall conjectures. For instance, in \cite{MP1}, we explicitly described the minimal brick-infinite biserial algebras. From that, one can verify all the conjectures in Section \ref{Subsection:bBT Conjs} for arbitrary biserial algebras. 
On the other hand, we can strengthen our earlier results on some minimal brick-infinite algebras.

\begin{corollary}\label{Cor:reduction to min-brick-inf}
Let $A$ be a minimal brick-infinite algebra that admits infinitely many unfaithful bricks. Then, $A$ admits an infinite semibrick consisting of bricks of the same dimension.
\end{corollary}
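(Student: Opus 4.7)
The plan is to reduce the statement, via Proposition \ref{Prop:2nd bBT gives an infinite family of orthogonal bricks}, to the task of exhibiting a single integer $d > 0$ with $\brick(A,d)$ infinite. Once such a $d$ is found, that proposition automatically upgrades $\brick(A,d)$ to contain an infinite Hom-orthogonal subfamily, which is the desired infinite semibrick of bricks all of dimension $d$.

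The first step to locate such a $d$ is a pigeonhole argument on supports. Each unfaithful brick $B$ has a support $\supp(B) \subseteq \{1,\ldots,n\}$, and there are only $2^n$ such subsets, so some $S$ is shared by infinitely many of the unfaithful bricks. If $S \subsetneq \{1,\ldots,n\}$, then each such $B$ is annihilated by the non-zero ideal $J_S := \sum_{s \notin S} A e_s A$, so that $B$ is a brick of the proper quotient $A/J_S$; but by the minimality of $A$, the algebra $A/J_S$ is brick-finite, contradicting the infinitude of such bricks. Hence, after passing to a subfamily, I may assume there is an infinite family $\{B_i\}_{i \in \mathbb{N}}$ of sincere unfaithful bricks. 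A second pigeonhole on annihilators, using that bricks of $A$ with a fixed annihilator $J \neq 0$ form a subset of $\brick(A/J)$ and hence a finite set, lets me further assume that the ideals $J_i := \ann(B_i)$ are pairwise distinct.

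The remaining step is to argue that, under these refined hypotheses, some dimension $d$ is hit by infinitely many $B_i$; this is a special case of the $2nd$ bBT conjecture, and it is the content of the reduction established in \cite[Theorem 1.4]{MP3}. The argument adapts here: the minimality of $A$ constrains the infinite family of distinct annihilators $\{J_i\}$, via the finitely many isotypic classes of simple sub-bimodules of $A$, enough to force a stable dimension for the associated bricks. Once such a $d$ is produced, a final application of Proposition \ref{Prop:2nd bBT gives an infinite family of orthogonal bricks} completes the proof.

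I expect the main obstacle to lie precisely in this last step, namely in controlling the dimensions of the sincere unfaithful bricks across an infinite family of distinct annihilators, where the minimality hypothesis on $A$ plays an essential role. By contrast, the two preliminary pigeonhole reductions, on supports and on annihilators, are elementary once the explicit description of $J_S$ and the brick-finiteness of $A/J$ for $J \neq 0$ are in place.
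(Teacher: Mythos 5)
You reduce the claim to producing a single $d>0$ with $\brick(A,d)$ infinite and then invoking Proposition \ref{Prop:2nd bBT gives an infinite family of orthogonal bricks}; that final appeal is exactly what the paper does. Your two preliminary pigeonhole steps are also sound: since each proper quotient of a minimal brick-infinite algebra is brick-finite, infinitely many unfaithful bricks must share the full support (so are sincere), and since for any fixed nonzero ideal $J$ the set of bricks with annihilator exactly $J$ embeds in $\brick(A/J)$ and hence is finite, the annihilators of the sincere family can be taken pairwise distinct.

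The genuine gap is in the step you yourself flag as the obstacle: showing that this infinite family of sincere unfaithful bricks with pairwise distinct nonzero annihilators must hit a common dimension. The remark about "finitely many isotypic classes of simple sub-bimodules of $A$" is not an argument, and the citation to \cite[Theorem 1.4]{MP3} is the wrong one — as recalled in the text just before this corollary, that theorem is a \emph{reduction} statement (the $2nd$ bBT conjecture reduces to minimal brick-infinite algebras for which almost all bricks are faithful) and does not itself exhibit infinitely many bricks of a common dimension. The paper instead cites \cite[Theorem 5.4]{MP3}, which directly asserts that a minimal brick-infinite algebra with infinitely many unfaithful bricks admits an infinite family of bricks of the same dimension; Proposition \ref{Prop:2nd bBT gives an infinite family of orthogonal bricks} then concludes. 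Your pigeonhole reductions are plausibly part of what a proof of that theorem uses, but as written the proposal leaves the essential dimension-control step open rather than established.
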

\begin{proof}
By \cite[Theorem 5.4]{MP3}, $A$ admits an infinite family of bricks of the same dimension. Now, the desired result follows from Proposition \ref{Prop:2nd bBT gives an infinite family of orthogonal bricks}.
\end{proof}

From Remark \ref{Rem:on semibrick conjecture} and the preceding corollary, it is immediate that the semibrick conjecture holds for any algebra that admits a minimal brick-infinite quotient of the type described in Corollary \ref{Cor:reduction to min-brick-inf}.

\medskip

The connection between the Hom-orthogonal set of bricks and wide subcategories has been studied in \cite{As} and \cite{En1}. Our results on Hom-orthogonal modules can be further investigated through that lens.

\begin{proposition}\label{Cor:brick-fin simple hom-orthogonal}
For each brick-finite algebra $A$ of rank $n$, the only semibrick of size $n$ is comprised of simple $A$-modules.
\end{proposition}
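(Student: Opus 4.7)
The plan is to show that the semibrick $S = \{B_1, \ldots, B_n\}$ coincides with the set of isomorphism classes of simple $A$-modules. First, applying Proposition~\ref{Prop:brick-fin and Hom-orthogonality} to the Hom-orthogonal set $\mathcal{X} = S$ of size $n$ yields $n \leq \#\supp(S) \leq n$, so $\#\supp(S) = n$; that is, every simple $A$-module is a composition factor of some $B_i$.

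The key obstruction I plan to exploit is a composition argument: for distinct indices $i \neq i'$ and any simple module $T$, a surjection $B_i \twoheadrightarrow T$ composed with an injection $T \hookrightarrow B_{i'}$ produces a nonzero morphism in $\Hom_A(B_i, B_{i'})$, contradicting the semibrick hypothesis. Combined with the brick condition $\End_A(B_i) = k$, a closely related argument shows that whenever a simple $T$ is both a quotient and a submodule of the same brick $B_i$, the composite $\iota \circ \pi \colon B_i \twoheadrightarrow T \hookrightarrow B_i$ is a scalar endomorphism whose image has dimension $\dim_k T = 1$; since the nonzero scalar case forces $\dim_k B_i = 1$, while the zero scalar case forces $\pi = 0$, one concludes $B_i \cong T$ is itself simple. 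Together these yield: for each simple $T$, either $T \cong B_i$ for some $i$ (with this $B_i$ necessarily a simple module), or $T$ is ``one-sided'' with respect to $S$, appearing in $\topm(B_i)$ for some $i$ but in $\soc(B_{i'})$ for no $i'$ (or vice versa). Moreover, applying Proposition~\ref{Prop:brick-fin and Hom-orthogonality} to the enlarged set $\{T\} \cup S$ of size $n+1$ rules out the case in which $T$ is Hom-orthogonal to every $B_i$, so every simple $T$ does satisfy one of these cases.

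To finish, I suppose for contradiction that some $B_{i_0} \in S$ is non-simple, and among all size-$n$ semibricks containing a non-simple member I choose one with minimal $\sum_i \dim_k B_i$. Picking a simple summand $T$ of $\topm(B_{i_0})$, the analysis above shows that $T$ is top-only with respect to $S$. In the subcase where $T$ lies in the top of $B_{i_0}$ alone, one checks directly that the replacement $S' := \{T\} \cup \{B_i : i \neq i_0\}$ is a Hom-orthogonal set of size $n$ (the new relations vanish precisely because $T$ is in the socle of no $B_i$ and in the top of no $B_i$ with $i \neq i_0$), hence a semibrick, but of strictly smaller total dimension than $S$, contradicting minimality.

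The main obstacle will be the remaining subcase, in which $T$ lies in the top of several members of $S$ simultaneously. I plan to handle this either by (i) performing a finer local replacement using the kernels of the several surjections $B_i \twoheadrightarrow T$ inside $\filt(S)$ to reduce to the unique-top subcase, or (ii) invoking the bijection between semibricks of $A$ and functorially finite wide subcategories of $\modu A$: since $\filt(S)$ is then a wide subcategory with $n$ simple objects in a brick-finite algebra of rank $n$, and $\modu A$ is the unique such wide subcategory, one obtains $\filt(S) = \modu A$ and hence $S = \{S_1, \ldots, S_n\}$.
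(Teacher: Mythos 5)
Your approach is genuinely different from the paper's short argument, which observes that a brick-finite algebra is $\tau$-tilting finite, so every semibrick is left-finite, and then invokes Asai's bijection between left-finite semibricks and functorially finite torsion classes: the only such torsion class having $n$ minimal co-extending bricks is $\modu A$, whose co-extending bricks are exactly the simple modules. Your preliminary analysis (using Proposition~\ref{Prop:brick-fin and Hom-orthogonality} to show $\supp(S)$ is everything, the composition obstruction forbidding a simple $T$ to sit in $\topm(B_i)$ and $\soc(B_{i'})$ for $i\neq i'$, the scalar argument forcing $B_i \cong T$ when $T$ is both a quotient and a submodule of $B_i$, and ruling out simples Hom-orthogonal to all of $S$) is correct and would give a more self-contained argument if it were completed.

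It is not completed, however. Even in the ``good'' subcase where $T$ lies in the top of $B_{i_0}$ alone, minimality is only contradicted if $S'$ still contains a non-simple member; if $S'$ turns out to consist entirely of simple modules you need a separate line (which is available: $\soc(B_{i_0})$ then contains either $T$, forcing $B_{i_0}\cong T$, or some $B_j$ with $j\neq i_0$, giving $\Hom_A(B_j,B_{i_0})\neq 0$), but as written the contradiction to minimality does not go through. The serious gap is the ``main obstacle'' subcase, which you acknowledge but do not resolve. Option (i) is only a hope: the kernel of $B_i\twoheadrightarrow T$ need not be a brick, need not be indecomposable, and there is no reason for it to remain Hom-orthogonal to the remaining $B_j$, so no concrete replacement inside $\filt(S)$ is exhibited. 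Option (ii) is essentially the paper's route, but its key input---that a brick-finite algebra of rank $n$ has $\modu A$ as its unique functorially finite wide subcategory with $n$ simple objects---is asserted without proof, and establishing it requires exactly the Asai--Demonet--Iyama--Jasso machinery your elementary argument was meant to avoid. So the proposal captures useful structure but does not close the proof; you should either supply an actual argument for the multi-top subcase or route through Asai's bijection from the outset, as the paper does.
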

\begin{proof}
Let $\mathcal{X}:=\{X_1, \cdots, X_n\}$ be a semibrick. Since $A$ is brick-finite, then $\mathcal{X}$ is a left-finite semibrick, hence it corresponds to the set of minimal co-extending bricks of a functorially finite torsion class (see \cite[Section 2.2]{As}). The only functorially finite torsion class having $n$ minimal co-extending bricks is $\modu A$, and the corresponding minimal co-extending bricks is the set of simple $A$-modules. Thus, $\mathcal{X}$ has to be the set of simple $A$-modules.
\end{proof}

We end this section by the following remark, which shows that the $2nd$ bBT conjecture implies a new characterization of brick-finite algebras in terms of wide subcategories. Although we do not make a detailed study of wide subcategories in this work, we state the following result to highlight some applications of the brick-Brauer-Thrall conjectures to some other problems.

\begin{remark}
Should Conjecture \ref{Conj: 2ndbBT} hold for an algebra $A$, then $A$ is brick-finite if and only if every wide subcategory in $\modu A$ is functorially finite. 
\end{remark}

\begin{proof}
We first recall that a wide subcategory $\mathcal{W}$ in $\modu A$ is functorially finite if and only if $\mathcal{W}$ is equivalent to $\modu B$, for a finite dimensional algebra $B$ (for example, see \cite[Proposition 4.12]{En2}).
In particular, if $A$ is brick-finite, then each wide subcategory is of this form, that is, every wide subcategory $\mathcal{W}$ in $\modu A$ is functorially finite (see \cite[Corollary 3.11]{MS}).
Observe that if $A$ is brick-infinite, Conjecture \ref{Conj: 2ndbBT} implies that there is an infinite family of bricks of the same dimension, say $\{X_i\}_{i \in \mathbb Z}$ in $\brick(A)$. Moreover, Proposition \ref{Prop:2nd bBT gives an infinite family of orthogonal bricks} yields an infinite semibrick as a subset of $\{X_i\}_{i \in \mathbb Z}$. Now, if we consider the wide subcategory $\mathcal{W}$ generated by this infinite semibrick, then we obtain a wide subcategory $\mathcal{W}$ in $\modu A$ which has infinitely many simple objects. Evidently, $\mathcal{W}$ cannot be equivalent to the module category of a finite dimensional algebra, that is, $\mathcal{W}$ is not functorially finite. 
\end{proof}

\section{Orthogonal orbits and irreducible components}\label{Section: Orthogonal orbits and irreducible components}
In the following, $A$ is always assumed to be a basic, connected finite dimensional algebra of rank $n$ over an algebraically closed field $k$.  
For an irreducible component $\Z \in \Irr(A)$, two orbits $O_X$ and $O_Y$ in $\Z$ are said to be orthogonal if $X$ and $Y$ are Hom-orthogonal modules.
In this section we focus on the behaviour of the Hom-orthogonal modules in the irreducible components of $A$. 
Through this geometric approach, we obtain a surprisingly elementary equivalent condition for the existence of an infinite family of bricks of the same dimension (Theorem \ref{Thm:1-side vanishing Hom of two modules} and Corollary \ref{Cor: geometric version of local algebras}). In particular, we prove Proposition B from Section \ref{Section: Introduction}. We also give a characterization of brick-finite algebras as a generalization of the local algebras.

\medskip

We begin with a handy lemma, which should be known to experts, but we have not found it explicitly written in the literature. The proof uses some standard geometric arguments with upper-semicontinuity.
\begin{lemma} \label{No orhtogonal modules when a dense orbit}
If $\Z \in \Irr(A)$ has a dense orbit, then it contains no pair of modules $X$ and $Y$ with $\Hom_A(X,Y)=0$.
\end{lemma}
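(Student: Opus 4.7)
The plan is to argue by upper semicontinuity of the Hom functor on the product $\Z\times \Z$. Specifically, suppose $M\in \Z$ has $O_M$ dense in $\Z$, so that $\Z=\overline{O_M}$. Then $O_M\times O_M$ is an open dense subset of the irreducible variety $\Z\times \Z$. The key standard fact I will invoke is that, over the product of representation varieties, the function
\[
(X,Y)\;\longmapsto\;\dim_k\Hom_A(X,Y)
\]
is upper semicontinuous, because $\Hom_A(X,Y)$ is realized uniformly as the kernel of a family of linear maps whose matrix entries depend polynomially on $(X,Y)$.

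Given this, the generic (i.e.\ minimum) value of $\dim_k\Hom_A(-,-)$ on $\Z\times \Z$ is attained on some dense open subset $U\subseteq \Z\times \Z$. Since $O_M\times O_M$ is also dense and open in $\Z\times \Z$, the intersection $U\cap (O_M\times O_M)$ is non-empty. On any point of this intersection the module pair is isomorphic to $(M,M)$, so the generic value equals $\dim_k\Hom_A(M,M)\geq 1$, as the identity morphism is always non-zero. By upper semicontinuity we then conclude that $\dim_k\Hom_A(X,Y)\geq 1$ for every $(X,Y)\in \Z\times \Z$, which is exactly the claim.

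There is no real obstacle here beyond citing the upper semicontinuity of $\dim_k\Hom_A(-,-)$ correctly; the only thing to be careful about is that this semicontinuity is needed jointly in both arguments on $\Z\times \Z$, not merely in one variable at a time, so I would mention this explicitly (or alternatively deduce the joint version from the one-variable versions by a two-step application: fix $Y$ in the open dense orbit and vary $X$, then swap the roles). Once that is in place, the rest of the argument is a one-line consequence of the density of $O_M$ together with the triviality that $\End_A(M)\neq 0$.
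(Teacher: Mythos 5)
Your proof is correct, and it takes a genuinely different route from the paper's. The paper argues by contradiction: it first uses one-variable upper semicontinuity of $\dim_k\Hom_A(X,-)$ to deduce $\Hom_A(X,M)=0$ for the dense-orbit module $M$, then invokes Zwara's algebraic characterization of degenerations (Theorem~\ref{Thm: Zwara}) to produce a short exact sequence $0\to X\to M\oplus Z\to Z\to 0$, and finally applies $\Hom_A(-,M)$ to derive the absurdity $\Hom_A(M,M)=0$. You instead work directly with joint upper semicontinuity of $\dim_k\Hom_A(-,-)$ on $\Z\times\Z$: the minimum is attained on a non-empty open (hence dense) subset $U$ of the irreducible variety $\Z\times\Z$, which must meet the dense subset $O_M\times O_M$; on that intersection the value is $\dim_k\End_A(M)\geq 1$, so the minimum is at least $1$ and $\Hom_A(X,Y)\neq 0$ for all $(X,Y)\in\Z\times\Z$. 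This is strictly more elementary, since it dispenses with Zwara's theorem entirely; the only thing to be explicit about, as you note, is the joint semicontinuity on the product (which follows because $\Hom_A(X,Y)$ is the kernel of a linear system whose entries vary polynomially in $(X,Y)$, so the rank is lower semicontinuous). What the paper's route buys is that it stays inside the one-variable semicontinuity framework already in use in the rest of the paper and makes the degeneration machinery do the work, which fits the paper's broader emphasis on orbit-closure/degeneration arguments.
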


\begin{proof} 
Let $Z$ be such that $\overline{O_Z} = \Z$. Assume that the conclusion is false.  Then $\Z \times \Z$ contains a pair $(X,Y)$ with $\Hom_A(X,Y)=0$. By upper semicontinuity, there is an open (non-empty) set $\mathcal{O}$ in $\Z \times \Z$ for which $(M,N) \in \mathcal{O}$ implies $\Hom_A(M,N)=0$. But $O_Z \times O_Z$ is also an open set of $\Z \times \Z$. Since the latter is irreducible, the open sets $O_Z \times O_Z$ and $\mathcal{O}$ intersect, which gives the absurdity $\Hom_A(Z,Z)=0$.  
\end{proof}

The preceding lemma leads to the following result, using ideas similar to the proof of Proposition \ref{Prop:2nd bBT gives an infinite family of orthogonal bricks}. In particular, we verify the Strong Semibrick Conjecture for the algebras under consideration (see Remark \ref{Rem:on semibrick conjecture}).

\begin{proposition}
    \label{Thm:1-side vanishing Hom of two modules}
Let $\Z \in \Irr(A)$ contain a pair of non-isomorphic modules $X$ and $Y$ with $\Hom_A(X,Y)=0$.
Then, $\Z$ contains an infinite family of Hom-orthogonal modules. That being the case, $A$ admits an infinite semibrick consisting of bricks of the same dimension.
\end{proposition}

\begin{proof}
From Lemma \ref{No orhtogonal modules when a dense orbit}, it follows that $\Z$ does not contain a dense orbit. For a module $Z$ in $\Z$, note that the upper semicontinuity of ${\rm dim}_k\Hom_A(Z,-)$ gives an (possibly empty) open subset $Z^{\perp_0}$ in $\Z$, such that $\Hom_A(Z,N) = 0$, for all $N \in Z^{\perp_0}$. Similarly, there is an (possibly empty) open subet $\,^{\perp_0} Z$ in $\Z$ such that $\Hom_A(N,Z)=0$ for all $N \in \,^{\perp_0} Z$.

Now, let us fix $M$ in the non-empty subset $X^{\perp_0}$. The open set $\,^{\perp_0} M$ is again non-empty. Taking $L \in \,^{\perp_0} M$, we similarly get a non-empty open set $L^{\perp_0}$. Take $M_1$ in the non-empty intersection $\,^{\perp_0} M \cap L^{\perp_0}$, so that both $M_1^{\perp_0}$ and $\,^{\perp_0} M_1$ are non-empty. From this, we inductively define $M_i$ as being a module in the non-empty open set $O_i:=\bigcap_{j=1}^{i-1}\left(M_{j}^{\perp_0} \cap \,^{\perp_0} M_{j}\right)$, which consists of an infinite union of orbits. In particular, we get an infinite descending chain $O_1\supsetneq O_2 \supsetneq O_3 \supsetneq \cdots$ of open sets, each of which contains infinitely many orbits. 
In this way, we have defined an infinite family $\{M_i\}_{i \in \mathbb{Z}_{\geq 1}}$ of Hom-orthogonal modules in $\Z$.

For the last assertion on the existence of the infinite semibrick with bricks of the same dimension, we first note that Corollary \ref{cor:infinite family of orthogonal modules} implies that the infinite Hom-orthogonal set $\{M_i\}_{i \in \mathbb{Z}_{\geq 1}}$ induces an infinite semibrick $\mathcal{S}:=\{B_i\}_{i \in \mathbb{Z}_{\geq 1}}$. From the construction of the $B_i$, we know that $B_i$ is a submodule (and also a quotient module) of $M_i$.
In particular, since the $M_i$ belong to $\Z$, and therefore they are of the same dimension, we have an infinite subset of $\mathcal{S}$ consisting of bricks of the same dimension. This finishes the proof.
\end{proof}

The previous proposition has some interesting implication. In the following corollary, we give a new characterization of the brick-finite algebras as a generalization of the local algebras from the viewpoint of geometry. Since the result follows from the above theorem, we eliminate the proof. However, to motivate the assertion, let us recall that if $A$ is a local algebra, then for every pair of nonzero $A$-modules $X$ and $Y$, we have $\Hom_A(X,Y)\neq 0$.

\begin{corollary}\label{Cor: geometric version of local algebras}
If algebra $A$ is brick-finite, then for each $\Z \in \Irr(A)$, and every pair of modules $X$ and $Y$ in $\Z$, we have $\Hom_A(X,Y) \ne 0$.
\end{corollary}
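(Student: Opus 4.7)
The plan is to prove this as a direct contrapositive of Theorem \ref{Thm:1-side vanishing Hom of two modules}, using only the definition of brick-finiteness and a trivial case distinction to cover isomorphic pairs.

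First I would fix an irreducible component $\Z \in \Irr(A,\underline{d})$ with $\underline{d} \neq 0$ (the zero component contains only the zero module and is not part of the substantive claim) and a pair of modules $X, Y \in \Z$, noting that $X$ and $Y$ automatically have the same dimension vector. I would then split into two cases according to whether $X \simeq Y$. If $X \simeq Y$, any isomorphism $X \to Y$ witnesses $\Hom_A(X,Y) \neq 0$ and there is nothing more to do; this is the reason the case distinction is necessary, since Theorem \ref{Thm:1-side vanishing Hom of two modules} is formulated only for non-isomorphic pairs.

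If $X \not\simeq Y$, I would argue by contradiction and assume $\Hom_A(X,Y) = 0$. Then the hypotheses of Theorem \ref{Thm:1-side vanishing Hom of two modules} are met, so that theorem yields an infinite semibrick in $\modu A$ (in fact, one consisting of bricks of a common dimension). By definition a semibrick is a set of pairwise non-isomorphic bricks, so this forces $\brick(A)$ to be infinite, directly contradicting the brick-finiteness of $A$. Consequently $\Hom_A(X,Y) \neq 0$, as required.

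There is essentially no obstacle here; the content of the corollary is entirely packaged inside Theorem \ref{Thm:1-side vanishing Hom of two modules}. The only mild subtlety is to remember that brick-finiteness of $A$ genuinely rules out an \emph{infinite} semibrick (since each element of a semibrick is in particular a brick, and distinct elements are non-isomorphic), so the contradiction step is clean and automatic.
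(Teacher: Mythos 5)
Your proof is correct and takes exactly the approach the paper intends: the paper explicitly omits the proof, stating only that the corollary ``follows from the above theorem'' (Theorem \ref{Thm:1-side vanishing Hom of two modules}), and your argument is precisely that contrapositive, with the routine case distinction for isomorphic pairs and the observation that an infinite semibrick in particular gives infinitely many bricks.
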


Note that the $2nd$ bBT conjecture (Conjecture \ref{Conj: 2ndbBT}) implies that the converse of the above corollary should also hold, that is, $A$ is brick-finite if and only if whenever $\Hom_A(X,Y) = 0$, then $X$ and $Y$ belong to different components in $\Irr(A)$.

\medskip

As another consequence of Proposition \ref{Thm:1-side vanishing Hom of two modules}, and in light of Proposition \ref{Prop:2nd bBT gives an infinite family of orthogonal bricks}, we obtain a new equivalent condition for the $2nd$ bBT conjecture.
To put this result in perspective, recall that the celebrated Second Brauer-Thrall conjecture (now theorem) asserts that $A$ is a representation-infinite algebra if and only if there are infinitely many $d \in \mathbb{Z}_{>0}$ for which $\ind(A,d)$ is an infinite set. 
We observe that, to verify the $2nd$ bBT conjecture for $A$, it suffices to show that for some $d \in \mathbb{Z}_{>0}$ and infinitely many pairs $X$ and $Y$ in $\ind(A,d)$, we have $\Hom_A(X,Y)=0$. In other words, the $2nd$ bBT conjecture asserts $A$ is brick-infinite if and only if there are infinitely many Hom-orthogonal $A$-modules of the same dimension.

\section{Bricks and generalized standard components}\label{Section: Bricks and generalized standard components} 

As briefly recalled in Section \ref{Section: Preliminaries and Background}, among components of the Auslander-Reiten quivers, the generalized standard components feature some remarkable properties and have been crucial in various classification problems (see \cite{Sk1, Sk2, Li1, Li2, Li3, MS1, MS2, MS3} and the references therein). For instance, preprojective components are known to be generalized standard, and those algebras that admit a preprojective component are historically important and received a lot of attention in the study of the Second Brauer-Thrall conjecture -- now theorem (for details, see \cite{HV} and \cite[Section 6]{Bo}).
Here we consider the algebras that admit an arbitrary generalized standard component and treat them from the viewpoint of some modern analogues of the Second Brauer-Thrall conjecture. In fact, we show that all conjectures from Section \ref{Subsection:bBT Conjs} hold for any algebra that admits a generalized standard component.
This particularly extends some results of \cite{CKW}, where the authors treated some geometric properties of the algebras with a preprojective component. 
As a consequence, we obtain Theorem C and Corollary D stated in Section \ref{Section: Introduction}.

\medskip

Before stating our results, in the next paragraph, we lay out our methodology and make some observations to put our setting in perspective with some similar research conducted recently. Throughout, we freely use the terminology and results from Section \ref{Subsection: Generalized standard components} and the references provided there. 

\medskip

As mentioned previously (see Corollary \ref{Cor:reduction to min-brick-inf} and the paragraph before that), a useful approach to the study of the brick-Brauer-Thrall conjectures is the reduction to quotient algebras. Such reductions are extensively used while showing some of the brick-Brauer-Thrall conjectures for some families (see \cite{CKW} and \cite{MP1, MP2, MP3}).
Here we particularly highlight two important points. First, we recall that if $B$ is a quotient algebra of $A$, then $\modu B$ is a full subcategory of $\modu A$, thus $\brick(B,d')\subseteq \brick(A,d)$, for each $d\in \mathbb{Z}_{>0}$.
Second, we note that if $\Gamma_A$ has an infinite preprojective component, then there is a quotient algebra of $A$ that is tame concealed (see \cite[Chapter XIV]{SS}). Meanwhile, some fundamental results on the (tame) concealed algebras (see \cite[Chapter XII]{SS}), together with some geometric facts on bricks and stable modules, imply that all brick-Brauer-Thrall conjectures from Section \ref{Subsection:bBT Conjs} hold for concealed algebras.
Thus, if $\Gamma_A$ has a preprojective component, the stable brick conjecture (Conjecture \ref{Conj:CKW}) is verified for $A$ (compare \cite[Theorem 3.5 $\&$ Conjecture 5.4]{CKW}), from which the Second brick-Brauer-Thrall conjecture (Conjecture \ref{Conj: 2ndbBT}) will follow for $A$ (compare \cite[Section 6]{Mo1}), hence Proposition \ref{Prop:2nd bBT gives an infinite family of orthogonal bricks} implies that the semibrick conjecture holds for $A$ (Remark \ref{Rem:on semibrick conjecture}).
In contrast, if $\Gamma_A$ has an arbitrary (infinite) generalized standard component, $A$ does not necessarily admit a concealed quotient algebra. Thus, we employ a range of geometric and algebraic tools while treating different quotients of algebras under consideration.

\medskip

Our first proposition in this section treats the problem of existence of stable bricks in arbitrary generalized standard stable tubes, leading to some new results which are also useful in our reductive arguments.
Following our notation from Section \ref{Subsection: Rep. varieties and components}, for any module $X \in \modu A$, we consider the weight $\theta_X \in K_0(\proj A)$. More precisely, for each $M \in \modu A$ and the corresponding element $[M]=\underline{\dim}M$ in the Grothendieck group $K_0(\modu A)$, we have
$$\theta_X([M]):=\dim_k\Hom_A(X,M)-\dim_k\Hom_k(M,\tau X).$$

\medskip

Observe that, if $X$ is a homogeneous brick, then $X$ is $\theta_X$-stable (see Lemma \ref{Lem: Homogeneous brick is stable}). Hence, for homogeneous generalized standard stable tubes, the assertion of the following proposition follows from some earlier observations. However, a direct generalization of the above argument does not seem to work for arbitrary generalized standard stable tubes. Consequently, we construct a stability parameter using the direct sum of some modules along a co-ray ending at the mouth of the tube. This is the core of the following proof.

\begin{proposition}\label{Prop:standard tubes}
Let $\Gamma_A$ contain a generalized standard stable tube. Then, there exists a stability parameter $\theta \in K_0(\proj A)$ for which $A$ admits an infinite family of $\theta$-stable bricks of the same dimension.
\end{proposition}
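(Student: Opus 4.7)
The plan is to build a stability parameter from the mouth of the tube, check it stabilizes the mouth bricks via a periodic analogue of Lemma \ref{Lem: Homogeneous brick is stable}, and then leverage the geometric results of Section \ref{Section: Orthogonal orbits and irreducible components} to produce infinitely many bricks of the same dimension.

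First I would reduce to the case where the tube is faithful, replacing $A$ by $A/\ann(\mathcal{T})$. This is a harmless reduction since $\theta$-stability of a module over the quotient lifts to $\theta$-stability over $A$ (with the inflated weight), and the tube $\mathcal{T}$ remains generalized standard in the Auslander-Reiten quiver of $A/\ann(\mathcal{T})$. Let $r$ be the rank of $\mathcal{T}$, and let $B_1,\dots,B_r$ denote its mouth bricks, which are pairwise Hom-orthogonal by the discussion preceding Theorem \ref{Thm Sk1: almost all gen. stand. comp. are tubes} (combined with Sk1, Lemma 1.3).

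Next, I would define the candidate weight
\[
\theta \;:=\; \sum_{i=1}^{r} \theta_{B_i} \;\in\; K_0(\proj A).
\]
Because $\tau$ cyclically permutes $\{B_1,\dots,B_r\}$, the multiset $\{B_1,\dots,B_r\}$ (equivalently the module $B := B_1 \oplus \cdots \oplus B_r$) is $\tau$-invariant, so $\theta([M])$ simplifies to $\sum_i \bigl(\dim_k\Hom_A(B_i,M) - \dim_k\Hom_A(M,B_i)\bigr)$. Using the orthogonality relations $\Hom_A(B_i,B_j) = \delta_{ij}\,k$, an immediate calculation gives $\theta([B_j])=0$ for each $j$, and an adaptation of the proof of Lemma \ref{Lem: Homogeneous brick is stable} to the $\tau$-invariant semibrick $\{B_1,\dots,B_r\}$ shows that every $B_j$ is in fact $\theta$-stable. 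More generally the same argument yields $\theta$-stability of every brick in the wide subcategory $\mathcal{W}$ generated by $B_1,\dots,B_r$.

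To produce an infinite family of $\theta$-stable bricks of a single dimension, I would combine Theorem \ref{Thm:1-side vanishing Hom of two modules} with a deformation-theoretic argument inside a well-chosen irreducible component. Concretely, fix the dimension $d = \underline{\dim}(B_1) + \cdots + \underline{\dim}(B_r)$ and consider an irreducible component $\mathcal{Z} \in \Irr(A,d)$ containing the module $B$. The Hom-orthogonality among the $B_i$, together with nontrivial extensions arising from the tube (ensuring the $\tau$-orbit at height $r$ produces $r$ non-isomorphic indecomposables of dimension $d$), supplies two non-isomorphic modules in $\mathcal{Z}$ whose Hom spaces vanish in one direction. Theorem \ref{Thm:1-side vanishing Hom of two modules} then yields an infinite pairwise Hom-orthogonal family in $\mathcal{Z}$, and openness of the $\theta$-stable locus in $\mathcal{Z}$ (together with the $\theta$-stability of the generic object, inherited from the stability of the $B_j$ in $\mathcal{W}$) promotes this to an infinite family of $\theta$-stable bricks of dimension $d$.

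The main obstacle is exactly this last upgrade from $\theta$-semistability to $\theta$-stability for the infinite family. The issue is ruling out, for a generic $N \in \mathcal{Z}$, the existence of a proper nonzero submodule $N' \subsetneq N$ with $\theta([N']) = 0$ --- equivalently, showing that the generic object in $\mathcal{Z}$ is ``simple'' in $\mathcal{W}$. I expect to handle this via openness of the $\theta$-stable locus and the structural description of $\mathcal{W}$ as the category of nilpotent representations of the cyclic quiver of length $r$, where the relevant stability inequality is visible combinatorially from the Jordan-Hölder filtration in $\mathcal{W}$; when $r=1$, Lemma \ref{Lem: Homogeneous brick is stable} applies verbatim to each mouth brick along the $\tau$-orbit in its homogeneous tube, making the argument direct.
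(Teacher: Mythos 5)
Your choice of weight $\theta = \sum_{i=1}^{r}\theta_{B_i}$ is the core of the problem, and it does not work. Since $\tau$ permutes the $B_i$, your $\theta$ computes as $\theta([M]) = \sum_i\bigl(\dim\Hom(B_i,M) - \dim\Hom(M,B_i)\bigr)$, and because each $\theta([B_j])=0$, the linear form $\theta$ vanishes on the dimension vector of \emph{every} module in the wide subcategory $\mathcal{W} = \filt\{B_1,\dots,B_r\}$. Consequently, if $X$ is any tube module of quasi-length $\geq 2$ and $L \subsetneq X$ is a proper nonzero tube submodule (always available: take the quasi-socle), then $\theta([L]) = 0$, so $X$ is only $\theta$-semistable, not $\theta$-stable. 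Your claim that "the same argument yields $\theta$-stability of every brick in $\mathcal{W}$" is therefore false; the only $\theta$-stable objects in $\mathcal{W}$ are the mouth bricks $B_1,\dots,B_r$, and these have $r$ distinct dimension vectors, so you never get an infinite family of $\theta$-stable bricks of a single dimension. The difficulty you flag at the end ("upgrade from semistability to stability") is not a technical gap to be filled by combinatorics of the cyclic quiver — it is structural, and no argument can rescue this particular $\theta$.

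The paper avoids this precisely by choosing an \emph{asymmetric} weight: it sets $Y = X_1\oplus\cdots\oplus X_r$ where the $X_i$ are the first $r$ modules up a fixed coray (quasi-lengths $1,2,\dots,r$), so that $\theta_Y$ assigns weight $r-1$ to one mouth brick and $-1$ to the others. With this $\theta_Y$, every proper tube submodule of $X_r$ has strictly negative weight, and a direct argument (using that $\Hom(X_i,L)=0$ by brickness of $X_r$, and that $\Hom(L,\tau Y)\neq 0$ via quasi-socles and filtrations) establishes genuine $\theta_Y$-stability of $X_r$. From there, the paper does not invoke Theorem \ref{Thm:1-side vanishing Hom of two modules}; it instead notes $\pd_B(X_r)\leq 1$ and $\Hom(X_r,\tau X_r)\neq 0$ over $B=A/\ann(\mathcal{T})$, applies Proposition \ref{Prop: bricks with pd=1} to conclude the component is not a single orbit, and intersects the two nonempty open loci (bricks, $\theta_Y$-stable modules). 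A secondary inaccuracy in your sketch: the $r$ modules at quasi-length $r$ (the $\tau$-orbit at "height $r$") are \emph{not} pairwise Hom-orthogonal nor even one-sided Hom-vanishing — in a standard rank-$r$ tube, $\Hom(M[j,r],M[j',r])\neq 0$ for all $j,j'$, because the quasi-top of $M[j,r]$ maps into the quasi-socle chain of $M[j',r]$ — so the pair you propose to feed Theorem \ref{Thm:1-side vanishing Hom of two modules} does not exist in the way you describe.
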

\begin{proof}
Let $\mathcal{T}$ be a generalized standard stable tube of rank $r$ in $\Gamma_A$. As shown in \cite[Lemma 1.3]{Sk1}, the mouth of $\mathcal{T}$ consists of pairwise orthogonal bricks.
Consider the quotient algebra $B:=A/\ann(\mathcal{T})$, and observe that $\mathcal{T}$ is a faithful generalized standard stable tube in $\Gamma_B$. Therefore, for all $X \in \mathcal{T}$, we have $\pd_B(X)\leq 1$ and $\id_B(X)\leq 1$ (see \cite[Lemma 5.9]{Sk2}). 

Let $X_1$ be on the mouth of the tube $\mathcal{T}$ and consider the irreducible epimorphisms $X_{i+1} \to X_i$, for $i = 1, \ldots, r-1$. In other words, we take the last $r$ modules on the coray $\cdots \to X_3\to X_2 \to X_1$ in $\mathcal{T}$.
Put $Y := X_1 \oplus \dots \oplus X_r$ and consider the corresponding stability parameter $\theta:=\theta_Y(-)$ in $K_0({\rm proj} B)$. Observe that $\pd_B(Y)\leq 1$ and $\theta$ is well-defined. We claim that $X_r$ is a $\theta$-stable $B$-module.

First, using that $\mathcal{T}$ is a (generalized) standard tube, it is straightforward to check that $\Hom(Y,X_r) = \Hom(X_r, X_r)$ and also $\Hom(X_r, \tau Y) = \Hom(X_r, \tau X_r)$ are both one-dimensional. Therefore, $\theta([X_r])=0$. To prove that $X_r$ is $\theta$-stable, we next need to show that for any non-zero proper submodule $L$ of $X_r$, we have $\theta([L])<0$. 
If $\Hom(Y,L)\neq 0$, then $\Hom(X_i, L)\neq 0$, for some $1 \leq i \leq r$. Since $L$ is a submodule of $X_r$, this evidently implies $\Hom(X_i, X_r)\neq 0$. However, due to our choice of $X_i$'s, the only possibility is $i = r$. Because $X_r$ is a brick, and $L$ is a proper submodule of $X_r$, this is impossible. From this, we get $\Hom(Y,L)=0$, and it only remains to show $\Hom(L, \tau Y)$ is non-zero. We note that all quasi-simple modules in $\mathcal{T}$ appear as the quasi-socle of $\tau X_i$, for some $1 \le i \le r$. In particular, for each module $Z$ on the mouth of $\mathcal{T}$, there exists $1\leq i \leq r$ for which $Z$ is a submodule of $\tau X_i$.
Therefore, if $\Hom(L, \tau Y)=0$, then $\Hom(L,-)$ vanishes on all quasi-simple objects of $\mathcal{T}$. Since every object of $\mathcal{T}$ has a filtration by the quasi-simple modules of the tube, this implies that $\Hom(L,-)$ vanishes on all objects of $\mathcal{T}$. This particularly implies $\Hom(L, X_r)=0$, which is evidently a contradiction. This shows that $\Hom(L, \tau Y)\neq 0$ and because $\Hom(Y,L)=0$, we obtain $\theta([L])<0$. Hence, $X_r$ is $\theta$-stable, as claimed.

Finally, we note that, for any $d\in \mathbb{Z}_{>0}$ and each $\theta \in K_0(\proj B)$, it is well-known that the $d$-dimensional $\theta$-stable $B$-modules form an open subset $\mathcal{O}$ of the variety $\rep(B,d)$. In particular, for $d:=\dim(X_r)$ and $\theta:=\theta_Y$, since $X_r$ is $\theta$-stable, $\mathcal{O}$ is non-empty. On the other hand, $\pd_B(X_r)\leq 1$ and $\Hom_B(X_r, \tau_B X_r) \ne 0$. Hence, Proposition \ref{Prop: bricks with pd=1} implies that $\rep(B,d)$ contains an open subset $\mathcal{O}'$ which is an infinite union of orbits of bricks. Consequently, $\mathcal{O} \cap \mathcal{O}'$ is a non-empty open set which consists of an infinite union of orbits of $\theta$-stable bricks of dimension $d$. 
Since $B$ is a quotient of $A$, we have an induced split epimorphism $K_0({\rm proj} A) \to K_0({\rm proj} B)$ sending $[P]$ to $[P/\ann(\mathcal{T})P]$. Taking any $\theta' \in K_0({\rm proj} A)$ that maps to $\theta$ immediately yields that our infinite family of $\theta$-stable bricks in $\brick(B,d)$ yields an infinite family of $\theta'$-stable bricks in $\brick(A,d)$. This finishes the proof.

\end{proof}

Since almost all generalized standard components are stable tubes (Theorem \ref{Thm Sk1: almost all gen. stand. comp. are tubes}), the above proposition has some nice consequences. As we see later, this particularly has some interesting implications over tame algebras (see Corollary \ref{Cor:equiv. conditions for tame algebras}). 

\medskip

The following theorem is the main result of this section. Before we state this result, let us observe that any algebra that satisfies the condition of Proposition \ref{Prop:standard tubes} is obviously representation-infinite. Meanwhile, the Auslander-Reiten quiver of each representation-finite is generalized standard. In the next theorem, we treat the existence of arbitrary generalized standard components.
This more general setting also allows us to better highlight how our work extends some earlier results on the algebras that admit a preprojective component (compare \cite[Theorem 3.5]{CKW}).

\begin{theorem}\label{Thm: Alg. with generalized standard components}
If $\Gamma_A$ contains a generalized standard component, the following are equivalent:
\begin{enumerate}
    \item $A$ is representation-finite;
    \item for each dimension vector $\underline{d}$, and each irreducible component $\Z \in \Irr(A,\underline{d})$, there is a dense orbit in $\mathcal{Z}$;
    \item $\brick(A,d)$ is finite, for all $d\in \mathbb{Z}_{>0}$;
    \item $A$ is brick-finite;
    \item For each $\theta \in K_0(\proj A)$ and each dimension $d$, there exist only finitely many $\theta$-stable modules of dimension $d$.
\end{enumerate}
\end{theorem}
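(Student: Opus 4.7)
The plan is to prove a circle of implications centered at $(1)$. The directions $(1) \Rightarrow (2)$ and $(1) \Rightarrow (3) \Rightarrow (4) \Rightarrow (5)$ are immediate: for a representation-finite algebra, every $\rep(A,\underline{d})$ is a finite union of orbits (so each irreducible component has a dense orbit), and the sets counted in $(3)$, $(4)$, $(5)$ are all bounded by the total number of indecomposables. The implication $(2) \Rightarrow (1)$ follows quickly from the Second Brauer-Thrall theorem: a dense orbit in every component of every representation variety forces only finitely many isomorphism classes of indecomposables of each dimension, and hence representation-finiteness. The nontrivial content is therefore $(4) \Rightarrow (1)$, and a strengthened version of its proof will simultaneously establish $(3) \Rightarrow (1)$ and $(5) \Rightarrow (1)$.

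To prove $(4) \Rightarrow (1)$, I would argue by contradiction, assuming $A$ is brick-finite but representation-infinite. Because $A$ is connected, a classical theorem of Auslander rules out any finite connected component of $\Gamma_A$, so the given generalized standard component $\mathcal{C}$ is in fact infinite. My strategy is then to prove the stronger claim that if $\Gamma_A$ admits an infinite generalized standard component, then $A$ admits an infinite family of $\theta$-stable bricks of a common dimension for some $\theta \in K_0(\proj A)$; this contradicts brick-finiteness and also contradicts $(3)$ and $(5)$, closing the circle.

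To prove the strengthened claim, I would split into cases according to the Malicki--Skowronski decomposition recalled in Section~\ref{Subsection: Generalized standard components} (see \cite{MS3}): associated to the infinite generalized standard component $\mathcal{C}$ are three quotient algebras $A^{(lt)}_{\mathcal{C}}$, $A^{(rt)}_{\mathcal{C}}$ and $A^{(c)}_{\mathcal{C}}$, the first two being finite products of tilted algebras and the third a product of generalized multicoil enlargements of algebras carrying a faithful family of pairwise orthogonal generalized standard stable tubes. Since $\mathcal{C}$ is infinite, at least one of these quotients must be representation-infinite. In the coherent case, the relevant factor $A^{(c)}_i$ contains, by construction, a generalized standard stable tube inherited from the underlying family $\{\mathcal{T}_i\}$, and Proposition~\ref{Prop:standard tubes} directly produces the desired infinite family of $\theta$-stable bricks of a common dimension. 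In the tilted case, a representation-infinite tilted factor admits a preprojective (or preinjective) component, and the results of Chindris--Kinser--Weyman \cite{CKW}, as discussed at the start of this section, supply an infinite family of $\theta$-stable bricks of a common dimension. In either subcase, the stable bricks constructed live over a quotient $B$ of $A$ and therefore remain bricks of $A$ in the same dimension, with the stability weight lifted through the natural surjection $K_0(\proj A) \twoheadrightarrow K_0(\proj B)$ exactly as in the final paragraph of the proof of Proposition~\ref{Prop:standard tubes}.

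The main obstacle I anticipate lies in the regular sub-case $\mathcal{C} \cong \mathbb{Z}\Delta$ of Theorem~\ref{Thm: on gen. standard components}: by Theorem~\ref{Thm: on gen. standard components}$(2)$, the quotient $A/\ann(\mathcal{C})$ is wild tilted, and although such algebras are known to be brick-infinite with infinite families of bricks of a common dimension, identifying the correct preprojective component inside the wild tilted quotient to feed into \cite{CKW} for the $\theta$-stable enhancement requires some care. A secondary bookkeeping obstacle is ensuring that, whenever the MS3 decomposition produces several simultaneous factors, at least one factor inherits an infinite piece of $\mathcal{C}$; this is automatic since the admissible operations that glue finitely many ``tips'' onto the coherent middle leave $\mathcal{C}$ cofinitely inside some combination of the three factors, and therefore force an infinite part of $\mathcal{C}$ to persist in at least one of $A^{(lt)}_{\mathcal{C}}$, $A^{(rt)}_{\mathcal{C}}$, or $A^{(c)}_{\mathcal{C}}$.
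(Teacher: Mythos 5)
Your overall strategy — reduce to an infinite generalized standard component, then apply the Malicki--Skowro\'nski decomposition into $A^{(lt)}_{\mathcal{C}}$, $A^{(rt)}_{\mathcal{C}}$, $A^{(c)}_{\mathcal{C}}$, pass to a representation-infinite tilted quotient (via Chindris--Kinser--Weyman) or to a generalized multicoil enlargement with standard tubes (via Proposition~\ref{Prop:standard tubes}), and then lift the stability parameter along the surjection $K_0(\proj A)\twoheadrightarrow K_0(\proj B)$ — is essentially the paper's own proof. Your worries about the $\mathbb{Z}\Delta$ sub-case and the ``bookkeeping'' of which factor remains infinite are already absorbed by the MS3 decomposition as you invoke it (a regular $\mathbb{Z}\Delta$ component has non-trivial left and right stable acyclic parts, so it is handled through the tilted factors; and the paper simply records that at least one of the three quotients is non-trivial and representation-infinite because $\mathcal{C}$ is infinite), so these are not real obstacles.

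There is, however, a genuine gap in the way you place $(2)$ into the circle. You claim that $(2)\Rightarrow(1)$ ``follows quickly from the Second Brauer--Thrall theorem'' because dense orbits in every component would force $\ind(A,d)$ to be finite for all $d$. That intermediate step is not immediate: a component with a dense orbit can perfectly well contain infinitely many orbits. For the Kronecker algebra, $\rep(A,(2,1))\cong k^4$ is irreducible with a dense orbit (the indecomposable brick of that dimension vector), yet it contains the infinite family of orbits $O_{S_1\oplus R_\lambda}$, $\lambda\in\mathbb{P}^1$, in the complement of the dense orbit. So ``dense orbit everywhere'' does \emph{not} by itself bound the number of orbits of each dimension, and bounding the number of \emph{indecomposable} orbits requires a separate argument (e.g., the tame-wild dichotomy together with Crawley--Boevey's theorem), not merely a citation of the Second Brauer--Thrall theorem. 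The paper avoids this entirely by proving only $(2)\Rightarrow(3)$: a brick component either has a unique brick orbit or has $c(\Z)>0$, so dense orbits everywhere force $\brick(A,d)$ to be finite for each $d$, and then the circle closes through $(3)\Rightarrow(5)\Rightarrow(1)$. You should replace your $(2)\Rightarrow(1)$ step with this $(2)\Rightarrow(3)$ argument. Finally, a small slip: you write the chain $(1)\Rightarrow(3)\Rightarrow(4)\Rightarrow(5)$, but $(3)\Rightarrow(4)$ is precisely the (open, in general) second brick-Brauer--Thrall implication and is not ``immediate''; the correct evident chain is $(1)\Rightarrow(4)\Rightarrow(3)\Rightarrow(5)$. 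Since your real justification is that each of $(3),(4),(5)$ follows directly from $(1)$, this is a presentational error rather than a logical one, but the displayed chain as written asserts something you cannot yet prove.
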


Before providing a proof, we recall some key tools and notations. As mentioned in Section \ref{Subsection: Generalized standard components}, if $\mathcal{C}$ is an infinite generalized standard component in $\Gamma_A$, by \cite[Theorem 1.2]{MS3}, $A$ admits three quotient algebras $A^{(lt)}_{\mathcal{C}}$, $A^{(rt)}_{\mathcal{C}}$, and $A^{(c)}_{\mathcal{C}}$, respectively called the \emph{left tilted}, the \emph{right tilted}, and the \emph{coherent} algebra of $\mathcal{C}$ (for the definition and properties, see \cite[Section 2 $\&$ 3]{MS3}).
Here we emphasize that $A^{(lt)}_{\mathcal{C}}$ and $A^{(rt)}_{\mathcal{C}}$ are finite (possibly trivial) products of tilted algebras, and $A^{(c)}_{\mathcal{C}}$ is a finite (possibly trivial) product of some generalized multicoil enlargements a family of algebras.
Moreover, if $\mathcal{C}^{(l)}$, $\mathcal{C}^{(r)}$, and $\mathcal{C}^{(c)}$ respectively denote the translation quivers of $A^{(lt)}_{\mathcal{C}}$, $A^{(rt)}_{\mathcal{C}}$, and $A^{(c)}_{\mathcal{C}}$, then $\mathcal{C}^{(l)}\cupdot \mathcal{C}^{(r)}$ contains almost all acyclic elements of $\mathcal{C}$, and $\mathcal{C}^{(c)}$ contains almost all cyclic elements of $\mathcal{C}$. Because $\mathcal{C}$ is an infinite component, at least one of the algebras $A^{(lt)}_{\mathcal{C}}$, $A^{(rt)}_{\mathcal{C}}$, and $A^{(c)}_{\mathcal{C}}$ is non-trivial and representation-infinite.

\medskip

\begin{proof}[Proof of Theorem \ref{Thm: Alg. with generalized standard components}]
We first observe that the implications $(1)\rightarrow (4)\rightarrow (3) \rightarrow (5)$ are evident from the definition. Moreover, $(1)\rightarrow (2)\rightarrow (3)$ hold for arbitrary algebras. 
In particular, if $A$ is representation-finite, each $\Z \in \Irr(A)$ has only finitely many orbits, implying that $c(\Z)=0$. Therefore, $\Z=\overline{O_X}$, for some $X\in\Z$. This shows $O_X$ is dense, and verifies $(1)\rightarrow (2)$. Furthermore, as discussed in Section \ref{Subsection: Rep. varieties and components}, each brick component $\Z \in \Irr(A)$ contains either a unique orbit, or else $\Z$ has infinitely many orbits. In the latter case, $c(\Z)>0$, which contradicts the assumption that all components $\Z \in \Irr(A)$ have dense orbit. Thus, for every $d \in \mathbb{Z}_{>0}$, there are only finitely many bricks of dimension $d$. This shows $(2)\rightarrow (3)$.

Thanks to the above general facts, to complete the sequence of implications and conclude the equivalence of all, we only need to show that the implications $(5)\rightarrow (1)$ holds for any algebra which admits a generalized standard component.
Meanwhile, observe that a component $\mathcal{C}$ in $\Gamma_A$ is finite if and only if $A$ is a representation-finite algebra (see \cite[VII 2.1]{ARS}).
In particular, if $\Gamma_A$ has a finite generalized standard component, we have nothing to prove. Thus, henceforth we assume $\mathcal{C}$ is an infinite generalized standard component in $\Gamma_A$. In this case, we prove that $A$ admits an infinite family of $\theta$-stable bricks of the same dimension, for some $\theta \in K_0(\proj A)$.

As noted in \cite{MS3}, if the infinite generalized standard component $\mathcal{C}$ admits left stable acyclic part, then the left tilted algebra $A^{(lt)}_{\mathcal{C}}$ is nontrivial. That being the case, from the description of $A^{(lt)}_{\mathcal{C}}$ in \cite[Theorem 1.2 (i)]{MS3}, it follows that $A$ has a quotient algebra which is tilted and representation-infinite, so the desired result is immediate, particularly because such an algebra admits an infinite preprojective component, and hence has a tame concealed quotient algebra (compare \cite[Theorem 3.5]{CKW}). A similar argument holds if $\mathcal{C}$ admits right stable acyclic part (see \cite[Theorem 1.2 (ii)]{MS3}).
Hence, we can further assume every (infinite) generalized standard component $\mathcal{C}$ in $\Gamma_A$ is such that both $A^{(lt)}_{\mathcal{C}}$ and $A^{(rt)}_{\mathcal{C}}$ are trivial. This implies that the coherent algebra $A^{(c)}_{\mathcal{C}}$ must be representation-infinite. We show that $A^{(c)}_{\mathcal{C}}$ admits an infinite family of $\theta$-stable bricks of the same dimension, for some stability parameter $\theta$.

By \cite[Theorem 1.2 (iii)]{MS3}, we have $A^{(c)}_{\mathcal{C}}=A^{(c)}_{1}\times \cdots \times A^{(c)}_{p}$, where, for each $1\leq i \leq p$, the algebra $A^{(c)}_{i}$ is a generalized multicoil enlargement of an algebra $B^{(c)}_i$ with a faithful family of pairwise orthogonal generalized standard stable tubes in $\Gamma_{B^{(c)}_i}$.
As mentioned in Section \ref{Subsection: Generalized standard components}, the algebra $A^{(c)}_{i}$ is obtained from $B^{(c)}_i$, via a sequence of admissible algebra operations given in \cite[Section 3]{MS2}. From the aforementioned operations, it follows that $B^{(c)}_i$ is a quotient algebra of $A^{(c)}_{i}$, and therefore a quotient of $A$. Hence, $\modu B^{(c)}_i$ is a full subcategory of $\modu A$.
Thus, to finish the proof, we note that $B^{(c)}_i$ admits a faithful family $\mathcal{T}^{B^{(c)}_i}$ of pairwise orthogonal generalized standard stable tubes. Hence, by Proposition \ref{Prop:standard tubes}, $B^{(c)}_i$ admits an infinite family of $\theta'$-stable bricks of the same dimension, for some stability parameter $\theta' \in K_0(\proj B^{(c)}_i)$.
As argued in the last paragraph of the proof of Proposition \ref{Prop:standard tubes}, this $\theta'$ gives rise to some $\theta \in K_0(\proj A)$ for which we have infinitely many $\theta$-stable bricks of the same dimension. This particularly proves the implication $(5)\rightarrow (1)$, and hence finishes the proof of the theorem.
\end{proof}

\medskip

The above theorem has some interesting applications in the study of the open conjectures from Section \ref{Subsection:bBT Conjs}. In particular, the following corollary shows that, for the algebras that admit a generalized standard component, all of the conjectures listed in Section \ref{Subsection:bBT Conjs} are equivalent, and more importantly, they all hold for any such algebra.

\newpage

\begin{corollary}\label{Cor: all bBT conjectures for gen. stan. comp.}
Let $A$ be such that $\Gamma_A$ contains a generalized standard component. Then the following are equivalent:
\begin{enumerate}
    \item $A$ is brick-infinite;
    \item $A$ admits an infinite family of bricks of the same dimension;
    \item $A$ admits an infinite semibrick;
    \item There is a rational ray outside of the $\tau$-tilting fan of $A$;
    \item For some $\theta \in K_0(\proj A)$, there is an infinite family of $\theta$-stable bricks of the same dimension.
\end{enumerate}
\end{corollary}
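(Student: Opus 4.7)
The plan is to package the corollary as a short cycle of implications, leaning primarily on Theorem \ref{Thm: Alg. with generalized standard components} (which does essentially all the substantive work for $(1)\Leftrightarrow(2)\Leftrightarrow(5)$), on Proposition \ref{Prop:2nd bBT gives an infinite family of orthogonal bricks} (which upgrades any infinite family of bricks of fixed dimension to an infinite semibrick), and on a short external citation for the Demonet-type statement $(4)$.

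More concretely, I would first observe that under the hypothesis that $\Gamma_A$ has a generalized standard component, Theorem \ref{Thm: Alg. with generalized standard components} already provides the equivalences among $(1)$, $(2)$, and $(5)$: its item $(4)$ says $A$ is brick-finite, its item $(3)$ says each $\brick(A,d)$ is finite, and its item $(5)$ says there are finitely many $\theta$-stable modules in each dimension for every $\theta$, and all of these are equivalent to $A$ being representation-finite. Negating, $(1)\Leftrightarrow(2)\Leftrightarrow(5)$ is immediate. Next, $(2)\Rightarrow(3)$ is precisely Proposition \ref{Prop:2nd bBT gives an infinite family of orthogonal bricks}, which extracts an infinite semibrick of the same dimension from any infinite family in $\brick(A,d)$; and $(3)\Rightarrow(1)$ is trivial, because an infinite semibrick is in particular an infinite collection of bricks.

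It remains to weave in $(4)$. For $(4)\Rightarrow(1)$ I would use the contrapositive: if $A$ is brick-finite, then by \cite{DIJ} (recalled just after Conjecture \ref{Conj: Demonet's Conjecture}) the $\tau$-tilting fan $\mathcal{F}_A$ coincides with $\mathbb{R}^n$, so in particular every rational ray lies in $\mathcal{F}_A$. For the reverse direction $(1)\Rightarrow(4)$ I would route through $(5)$: since $(1)\Leftrightarrow(5)$ has already been established, the infinite family of $\theta$-stable bricks of the same dimension gives exactly the hypothesis of Pfeifer's implication \cite[Proposition 5.6]{Pf2} (quoted in Section \ref{Subsection:bBT Conjs}) that the Stable $2nd$ bBT conjecture entails Demonet's conjecture, and this delivers a rational ray in $\mathbb{Q}^n$ outside $\mathcal{F}_A$. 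This closes the cycle $(1)\Rightarrow(2)\Rightarrow(3)\Rightarrow(1)$, $(1)\Leftrightarrow(5)$, and $(5)\Rightarrow(4)\Rightarrow(1)$.

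The main obstacle, such as it is, lies in $(1)\Rightarrow(4)$: the paper only alludes to Pfeifer's implication in passing, so I would need to verify that the hypothesis of \cite[Proposition 5.6]{Pf2} is literally our $(5)$ (it is), rather than a strictly stronger variant. Once that citation is checked, none of the other implications require more than a sentence: $(1)\Leftrightarrow(2)\Leftrightarrow(5)$ is immediate from the preceding theorem, $(2)\Rightarrow(3)$ is a direct application of Proposition \ref{Prop:2nd bBT gives an infinite family of orthogonal bricks}, and $(3)\Rightarrow(1)$ and $(4)\Rightarrow(1)$ are formal.
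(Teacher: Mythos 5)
Your decomposition of the equivalences matches the paper's own logical skeleton exactly: the paper likewise derives $(1)\Leftrightarrow(2)\Leftrightarrow(5)$ from Theorem \ref{Thm: Alg. with generalized standard components}, obtains $(2)\Rightarrow(3)$ from Proposition \ref{Prop:2nd bBT gives an infinite family of orthogonal bricks}, dismisses $(3)\Rightarrow(1)$ and $(4)\Rightarrow(1)$ as evident, and reduces everything to the single nontrivial step $(5)\Rightarrow(4)$. Where you diverge is precisely there. You discharge $(5)\Rightarrow(4)$ by citing \cite[Proposition 5.6]{Pf2} — the per-algebra statement that an infinite family of $\theta$-stable modules of fixed dimension forces a rational ray outside $\mathcal{F}_A$ — which the paper mentions in Section \ref{Subsection:bBT Conjs} but does not invoke in the proof. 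Instead the paper reproves that implication from scratch: it fixes a $\theta$-stable brick $X$ in a component with no dense orbit, passes to the wall $\Theta_X$ in the wall-and-chamber structure of $K_0(\proj A)$ following \cite{AI}, and argues by contradiction — via Bongartz completion, Jasso's $\tau$-tilting reduction equivalence \cite[Theorem 3.8]{Ja}, and \cite[Theorem 6.1]{MP3} — that the relative interior of $\Theta_X$ avoids the $\tau$-tilting fan, from which a rational ray in the interior of $\Theta_X$ does the job. Your route is shorter and logically complete, contingent only on the point you yourself flag: that the hypothesis of \cite[Proposition 5.6]{Pf2} is genuinely the per-algebra version of $(5)$ so that it applies to the particular $A$ at hand rather than merely relating the two conjectures globally (it is, so the citation is usable). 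The trade-off is self-containment: the paper's wall-crossing argument does not outsource the one delicate step, which is presumably why the authors wrote it out in full, whereas your version keeps the corollary proof to a few lines at the cost of depending on \cite{Pf2}. Both arguments are sound, and the rest of your proof coincides with theirs.
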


\begin{proof}
Thanks to Theorem \ref{Thm: Alg. with generalized standard components}, we only need to show that $(3)$ and $(4)$ are equivalent to any of the other assertions.
Observe that $(3)\rightarrow (1)$ and $(4)\rightarrow (1)$ are evident, and Proposition \ref{Prop:2nd bBT gives an infinite family of orthogonal bricks} implies $(2)\rightarrow (3)$. Hence, we only show $(5)\rightarrow (4)$. 

Assume that we have $\theta \in K_0(\proj A)$ such that there exists an infinite family of $\theta$-stable bricks of the same dimension $d$. Let us take $\Z \in \Irr(A)$ containing infinitely many such $\theta$-stable bricks and take $X$ to be any such brick. In particular, $X$ does not have an open orbit. Observe that being $\theta$-stable, it follows from \cite[Section 2.3]{AI} that $X$ gives rise to a wall $\Theta_X$ in the wall-and-chamber structure of $K_0(\proj A)$; for details, see Section 2.3 in \cite{AI}. 
We claim that the relative interior of $\Theta_X$ does not intersect the $\tau$-tilting fan of $A$. 
If we assume otherwise, then there exists a $\tau$-rigid module $T$ and a projective module $P$ such that the $g$-vector $\phi$ of the support $\tau$-rigid pair $(T,P)$ lies in the relative interior of $\Theta_X$. We may assume that $P$ has maximal support. This implies that $X$ is $\phi$-stable. Recall that
$$\phi = {\rm dim}_k\Hom(T, -) - {\rm dim}_k\Hom(-, \tau T) - {\rm dim}_k\Hom(P,-).$$
Let $U$ be the Bongartz completion of $T$ (therefore, we have that $U = (T \oplus T',P)$ is support $\tau$-tilting).  Using that $X$ is $\phi$-stable, we get that that $X$ is a simple object in the corresponding wide subcategory $\mathcal{W}_\phi$, which is the same as the category $T^\perp \cap {}^\perp(\tau T) \cap P^\perp$. Let $C = {\rm End}(U)/\langle T \rangle$ be the quotient of the endomorphism algebra of $U$ by the morphisms which factor through $T$. By \cite[Theorem 3.8]{Ja}, there exists an equivalence $F: T^\perp \cap {}^\perp(\tau T) \cap P^\perp \to \modu C$. Now, the brick $X$ is such that $F(X)$ is simple in $\modu C$, hence $F(X)$ is evidently a labeling brick between functorially finite torsion classes in $\modu C$. Therefore, from the above equivalence, it follows that $X$ is also a labeling brick between functorially finite torsion classes in $\modu A$. On the other hand, by \cite[Theorem 6.1]{MP3}, $X$ has an open orbit, which implies the desired contradiction. Hence, the relative interior of $\Theta_X$  does not intersect the $\tau$-tilting fan. Observe that the relative interior of $\Theta_X$ is non-empty because $\Theta_X$ has codimension one and we may assume that the rank of $A$ is at least two. Being a rational polyhedral cone, $\Theta_X$ certainly contains a rational ray $\psi$ in its relative interior. This $\psi$ yields a rational ray outside of the $\tau$-tilting fan, and finishes the proof of the implication $(5) \rightarrow (4)$. 
\end{proof}

Before we derive some interesting consequences of the above results, let us remark that Theorem \ref{Thm: Alg. with generalized standard components} and Corollary \ref{Cor: all bBT conjectures for gen. stan. comp.}, together, imply Theorem C in Section \ref{Section: Introduction}.

\medskip

The above results, together with some known facts on tame algebras and our earlier work, provide new tools for the study of our brick-Brauer-Thrall conjectures. This ultimately allows us to verify all the conjectures from Section \ref{Subsection:bBT Conjs} for a large subfamily of tame algebras. Before we collect these results in the following corollary, let us recall that, for a tame algebra $A$, it is well-known that $A$ admits an infinite family of indecomposables of the same dimension if and only if $\Gamma_A$ contains a homogeneous tube. This is the case if and only if $\Gamma_A$ contains an infinite family of tubes whose mouth are of the same dimension.
The following corollary gives analogous statements for those tame algebras that admit an infinite family of bricks of the same dimension (compare Conjecture \ref{Conj: 2ndbBT}).

\begin{corollary}\label{Cor:equiv. conditions for tame algebras}
If $A$ is a tame algebra of rank $n$, the following are equivalent:
\begin{enumerate}
    \item $A$ admits an infinite family of bricks of the same dimension;
    \item $\Gamma_A$ contains a (generalized) standard stable component;
    \item $\Gamma_A$ contains an infinite family of pairwise Hom-orthogonal tubes whose mouth are of the same dimension;
    \item For some $\theta \in K_0(\proj A)$, there is a rational curve of $\theta$-stable modules;
    \item There is a ray in $\mathbb{Q}^n$ which is outside the $\tau$-tilting fan of $A$.
\end{enumerate} 
\end{corollary}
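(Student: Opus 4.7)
The plan is to bootstrap from Theorem \ref{Thm: Alg. with generalized standard components} and Corollary \ref{Cor: all bBT conjectures for gen. stan. comp.}: once I establish $(1) \Leftrightarrow (2)$ and $(2) \Leftrightarrow (3)$ in the tame setting, the equivalence of $(4)$ and $(5)$ with the rest will follow automatically by applying Corollary \ref{Cor: all bBT conjectures for gen. stan. comp.} to $A$. The main new ingredient specific to the tame case is the upgrade of a generalized standard component to an infinite family of pairwise Hom-orthogonal homogeneous tubes, obtained via Crawley--Boevey's structure theorem.

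I would first dispose of the implications that do not require tameness in an essential way. For $(3) \Rightarrow (2)$, each member of the pairwise Hom-orthogonal family has mouth consisting of pairwise Hom-orthogonal bricks, so by \cite[Lemma 1.3]{Sk1} it is a generalized standard stable tube. For $(2) \Rightarrow (1)$, observe that a stable (i.e., regular) component cannot occur in a basic connected representation-finite algebra, since $\Gamma_A$ is then connected and contains every indecomposable projective; hence $A$ is representation-infinite and Theorem \ref{Thm: Alg. with generalized standard components} immediately yields (1). Moreover, tameness together with Theorem \ref{Thm: on gen. standard components}(2) forces the component to be a stable tube: otherwise $A/\ann(\mathcal{C})$ would be wild tilted, contradicting tameness of $A$. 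Proposition \ref{Prop:standard tubes} and Corollary \ref{Cor: all bBT conjectures for gen. stan. comp.} then deliver (4) and (5) at once. Finally, $(4) \Rightarrow (1)$ is immediate: a rational curve of $\theta$-stable modules lies in some $\rep(A,\underline{d})$ and parametrises infinitely many pairwise non-isomorphic $\theta$-stable modules of the same dimension, each of which is a brick.

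The heart of the argument is $(1) \Rightarrow (3)$. Assume $\brick(A,d)$ is infinite; then so is $\ind(A,d)$, and Crawley--Boevey's structure theorem for tame algebras furnishes a $k[t]$-$A$-bimodule $M$, free of finite rank over $k[t]$, whose specializations $M_\lambda$ at cofinitely many $\lambda \in k$ are pairwise non-isomorphic indecomposable $A$-modules of dimension $d$, sitting on the mouths of pairwise distinct homogeneous tubes $\mathcal{T}_\lambda$ in $\Gamma_A$. Since a homogeneous tube has rank one, its mouth is the single homogeneous brick $M_\lambda$, automatically of the required dimension $d$. The function $(\lambda,\mu) \mapsto \dim_k \Hom_A(M_\lambda, M_\mu)$ is constructible and upper semicontinuous, and by Proposition \ref{Prop:2nd bBT gives an infinite family of orthogonal bricks} applied to the irreducible component containing the family, it vanishes on a dense open subset off the diagonal. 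Restricting to a cofinite subfamily, I obtain infinitely many pairwise Hom-orthogonal mouth bricks; since in each homogeneous tube every module is filtered by its mouth brick, Hom-orthogonality of mouths propagates inductively to Hom-orthogonality of the whole tubes, proving (3).

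The loop is closed by $(5) \Leftrightarrow (1)$, which reads off from $(1) \Leftrightarrow (2)$ via Corollary \ref{Cor: all bBT conjectures for gen. stan. comp.}(4). I expect the principal obstacle to be the Crawley--Boevey step in $(1) \Rightarrow (3)$: producing infinitely many homogeneous tubes from the tame structure theorem is routine, but ensuring pairwise Hom-orthogonality within a single cofinite subfamily requires careful use of the bimodule parametrization to control $\Hom_A(M_\lambda, M_\mu)$ uniformly over the parameter space, rather than merely against a single fixed tube. All remaining implications then reduce cleanly to results already established in the preceding sections.
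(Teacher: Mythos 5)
Your overall architecture (dispatch the non-geometric implications to Theorem~\ref{Thm: Alg. with generalized standard components} and Corollary~\ref{Cor: all bBT conjectures for gen. stan. comp.}, and use Crawley--Boevey plus Proposition~\ref{Prop:2nd bBT gives an infinite family of orthogonal bricks} to produce the orthogonal tubes) tracks the paper's proof closely, and your treatment of $(3)\Rightarrow(2)$, $(2)\Rightarrow(1)$, $(1)\Rightarrow(3)$ and $(4)\Rightarrow(1)$ is essentially sound. However, there is a genuine gap in how you close the loop to $(4)$.

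You claim that once $(1)\Leftrightarrow(2)\Leftrightarrow(3)$ are established, ``the equivalence of $(4)$ and $(5)$ with the rest will follow automatically by applying Corollary \ref{Cor: all bBT conjectures for gen. stan. comp.}.'' That works for $(5)$, since Corollary~\ref{Cor: all bBT conjectures for gen. stan. comp.}(4) is precisely the statement about a rational ray missing the $\tau$-tilting fan. But Corollary~\ref{Cor: all bBT conjectures for gen. stan. comp.}(5) only asserts the existence of an \emph{infinite family} of $\theta$-stable bricks of the same dimension, which is \emph{a priori} strictly weaker than condition $(4)$ here, which demands a \emph{rational curve} of $\theta$-stable modules. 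You yourself acknowledge the direction $(4)\Rightarrow(1)$ is the easy one (a curve gives infinitely many isomorphism classes), but you never prove the converse upgrade from ``infinitely many $\theta$-stables of dimension $d$'' to ``a rational curve of $\theta$-stables.'' This upgrade is exactly the tame-specific ingredient: one must observe that the brick component $\mathcal{Z}\in\Irr(A,d)$ containing these stables has $c(\mathcal{Z})>0$ (infinitely many orbits), that tameness forces $c(\mathcal{Z})=1$, and that $c(\mathcal{Z})=1$ is equivalent to $\mathcal{Z}$ containing a rational curve of pairwise non-isomorphic bricks (the paper cites \cite[Section 2.2]{CC}); intersecting this curve with the open locus of $\theta$-stable modules (nonempty since it contains the homogeneous brick $X$, using Lemma~\ref{Lem: Homogeneous brick is stable}) then yields the desired curve. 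Without this step your argument establishes $(1)\Leftrightarrow(2)\Leftrightarrow(3)\Leftrightarrow(5)$ and $(4)\Rightarrow(1)$, but not the implication into $(4)$.

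A smaller remark: in $(1)\Rightarrow(3)$ you invoke Proposition~\ref{Prop:2nd bBT gives an infinite family of orthogonal bricks} to find an infinite Hom-orthogonal family, but the bricks it produces are not manifestly among your Crawley--Boevey specializations $M_\lambda$; you then need Crawley--Boevey's Corollary~E again to recognize that cofinitely many of \emph{those} bricks are mouths of homogeneous tubes, which is how the paper phrases it. The logic works, but the intermediate pass through Corollary~E should be made explicit rather than relying on the constructibility of $\dim_k\Hom_A(M_\lambda,M_\mu)$, which by itself does not pin the Hom-orthogonal subfamily to the $M_\lambda$'s.
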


Before we prove this corollary, let us recall that a stable tube $\mathcal{T}$ in $\Gamma_A$ is known to be standard if and only if $\mathcal{T}$ is generalized standard (see \cite[Lemma 1.3]{Sk1}). Moreover, we remark that some of the implications in the above corollary have already appeared in the literature and are known to the experts. Thus, for the sake of brevity, we only mention some key points and mostly provide references.

\begin{proof}
The implication $(1)\rightarrow (2)$, follows from some fundamental results in \cite{C-B} and \cite{Sk1, Sk2}. In particular, for some $d\in \mathbb{Z}_{>0}$, let $\brick(A,d)$ be an infinite set. Then,
\cite[Theorem D]{C-B} implies that almost all $X \in \brick(A,d)$ are homogeneous, therefore they lie in homogeneous tubes (\cite[Corollary E]{C-B}). On the other hand, a homogeneous stable tube $\mathcal{T}$ contains a brick if and only if its mouth is a brick. By \cite[Lemma 1.3]{Sk1}, this is the case if and only if $\mathcal{T}$ is a (generalized) standard tube.

The implication $(2)\rightarrow (3)$ follows from Proposition \ref{Prop:standard tubes} and Proposition \ref{Prop:2nd bBT gives an infinite family of orthogonal bricks}. More precisely, if $\Gamma_A$ admits a generalized standard tube, by Proposition \ref{Prop:standard tubes}, there exists an infinite family of bricks of the same dimension. Moreover, by Proposition \ref{Prop:2nd bBT gives an infinite family of orthogonal bricks}, we can find an infinite family of pairwise Hom-orthogonal bricks of the same dimension. Hence, by \cite[Corollary E]{C-B}, infinitely many of them lie on the mouth of homogeneous tubes. Such tubes are obviously pairwise Hom-orthogonal, because modules in each tube have a filtration by the quasi-simples lying on the mouth.

To prove $(3)\rightarrow (4)$, first observe that if $\Gamma_A$ contains an infinite family of pairwise Hom-orthogonal tubes whose mouth are of the same dimension, then there exists some $d\in \mathbb{Z}_{>0}$ such that $\brick(A,d)$ is an infinite set (see Corollary \ref{cor:infinite family of orthogonal modules}).
In particular, there exists a brick component $\Z \in \Irr(A)$ with $c(\Z)>0$. Since $A$ is tame, we must have $c(\Z)=1$, which is the case if and only if $\Z$ contains a rational curve consisting of pairwise non-isomorphic bricks (see \cite[Section 2.2]{CC}). 
Again, by \cite[Theorem D]{C-B}, we get that almost all $X \in \Z$ are homogeneous. Thus, by Lemma \ref{Lem: Homogeneous brick is stable}, every such brick $X$ is $\theta_X$-stable. Put $\theta:=\theta_X$, for one of such homogeneous bricks, and let $\Z^{s}_{\theta}$ denote the set of $\theta$-stable modules in $\Z$. Obviously, $\Z^{s}_{\theta}$ is nonempty and dense in $\Z$, hence the desired result follows.

Since the implication $(4)\rightarrow (1)$ follows from the definition, the above arguments show that the first four statements are equivalent.We further note that the equivalence of $(2)$ and $(5)$ follows from Corollary \ref{Cor: all bBT conjectures for gen. stan. comp.}. This completes the proof.
\end{proof}

Let us finish this section with a remark to highlight some connections between the problems and results in this work and some closely related topics, while pointing out some contrasts. Recall that $X$ in $\modu A$ is said to be \emph{$\tau$-rigid} if $\Hom_A(X,\tau X)=0$, and let $\textit{i}\taurigid(A)$ denote the set of all indecomposable $\tau$-rigid modules in $\modu A$.
Thanks to some fundamental results from \cite{DIJ}, for any algebra $A$, it is known that $\textit{i}\taurigid(A)$ and $\brick(A)$ are closely related, and a good knowledge of either of these two sets provides insights to the $\tau$-tilting theory of $A$.
More specifically, the authors gave a ``brick-$\tau$-rigid correspondence", as an explicit injective map from $\textit{i}\taurigid(A)$ into $\brick(A)$, and showed that $A$ is brick-finite if and only if $\textit{i}\taurigid(A)$ is a finite set, which is known to be equivalent to the $\tau$-tilting finiteness of $A$.
Consequently, our results on the study of bricks and their distribution can be seen through the lense of $\tau$-tilting theory and several other closely related areas of research.
Meanwhile, it is known that the maximal number of pairwise Hom-orthogonal components of $\Gamma_A$ containing $\tau$-rigid modules is at most $n$ (\cite[Corollary 2.2]{Sk1}). Observe that the verbatim brick-analogue of this statement is evidently wrong (consider the Kronecker algebra). However, if $A$ is brick-finite, Proposition \ref{Prop:brick-fin and Hom-orthogonality} implies that $\Gamma_A$ contains at most $n$ pairwise orthogonal components. Let us remark that the converse of the latter statement is not true in general. More precisely, if $A$ is a wild hereditary algebra, Baer \cite[Prop. 3.1]{Ba} showed that if $X$ and $Y$ are two regular modules in $\Gamma_A$, then there exists an integer $r$ for which $\Hom_A(X, \tau^r Y) \ne 0$. Hence, no two regular components in $\Gamma_A$ can be orthogonal. Consequently, for a (connected) wild hereditary algebra $A$, although $A$ is evidently brick-infinite, no pair of connected components of $\Gamma_A$ are orthogonal.

\vskip 1cm

\textbf{Acknowledgements.} 
The first-named author was supported by Early-Career Scientist JSPS Kakenhi grant number 24K16908.
The second-named author was supported by the Natural Sciences and Engineering Research Council of Canada and by the Canadian Defence Academy Research Programme. 
The authors would like to thank Lidia Angeleri Hügel, Sota Asai, Osamu Iyama, Rosanna Laking, Francesco Sentieri and Jan Schröer for some helpful discussions about problems related to this project.


\begin{thebibliography}{99999}

	 \bibitem[As]{As} S. Asai, \emph{Semibricks}, International Mathematics Research Notices, Volume 2020, Issue 16 (2020), 4993--5054
  
  \bibitem[AI]{AI} S. Asai, O. Iyama,  
\emph{Semistable torsion classes and canonical decompositions}, arXiv: 2112.14908 (2021).

   
    
	\bibitem[ASS]{ASS} I. Assem, D. Simson, A. Skowro\'nski,  
\emph{Elements of the representation theory of associative algebras},
Volume 1, Cambridge University Press, Cambridge (2006).

    \bibitem[ARS]{ARS} M. Auslander, I. Reiten and S. Smalø, 
\emph{Representation Theory of Artin Algebras, Cambridge Studies in Advanced Mathematics}, 36. Cambridge University Press, Cambridge (1997).

\bibitem[Ba]{Ba} D. Baer, \emph{Wild hereditary Artin algebras and linear methods}, Manuscripta Math 55 (1986), 69--82.

    \bibitem[Bo]{Bo} K. Bongartz, 
\emph{On minimal representation-infinite algebras}, arXiv:1705.10858v4 (2017).

        \bibitem[CC]{CC} C. Chindris, A. Carroll,
\emph{On the invariant theory for acyclic gentle algebras}, Trans. Amer.
Math. Soc. 367 (2015), 3481--3508.

    \bibitem[CKW]{CKW} C. Chindris, R. Kinser, J. Weyman,
\emph{Module Varieties and Representation Type of Finite-Dimensional Algebras}, Int. Math. Res. Not. (2015), 631--650.

\bibitem[C-B]{C-B} W. Crawley-Boevey,
\emph{On tame algebras and bocses}, Proc. London Math. Soc. (1988), 451--483.

    \bibitem[De]{De} L. Demonet, 
\emph{Combinatorics of mutations in representation theory}, Habilitation (2017), Available on Laurent Demonet’s website.

    \bibitem[DIJ]{DIJ} L. Demonet, O. Iyama, G. Jasso,
\emph{$\tau$-tilting finite algebras and $g$-vectors},  Int. Math. Res. Not. (2019), 852--892.

    \bibitem[Do]{Do} M. Domokos, 
\emph{Relative invariants for representations of finite dimensional algebras}, Manuscripta
Math. 108 (2002), 123--133.

    \bibitem[En1]{En1} H. Enomoto,
\emph{Monobrick, a uniform approach to torsion-free classes and wide subcategories},  Adv. Math. 393 (2021).

\bibitem[En2]{En2} H. Enomoto, \emph{Rigid modules and ice-closed subcategories in quiver representations}, J. Algebra 594 (2022), 364--388.

	\bibitem[G+]{G+} F. Gei\ss{}, D. Labardini-Fragoso, J. Schr\"oer,
\emph{Semicontinuous maps on module varieties}, arXiv:2302.02085 (2023).

    \bibitem[HV]{HV} D. Happel, D. Vossieck,
\emph{Minimal algebras of infinite representation type with prepro- jective component}, Manuscripta Mathematica 42, no. 2–3 (1983), 221--243.

   \bibitem[Ja]{Ja} G. Jasso,
\emph{Reduction of $\tau$-tilting modules and torsion pairs}, Int. Math. Res. Not. IMRN 2015,
no. 16 (2015), 7190--237

    \bibitem[Li1]{Li1} S. Liu,
\emph{Infinite radicals in standard Auslander-Reiten components}, J. Algebra 166 (1994),
245--254.

    \bibitem[Li2]{Li2} S. Liu,
\emph{On short cycles in a module category}, Journal of the London Mathematical Society, Volume 51, Issue 1 (1995), 62--74.

   \bibitem[Li3]{Li3} S. Liu,
\emph{Tilted algebras and generalized standard Auslander-Reiten components}, Archiv Math.
(Basel) 61 (1993), 12--19.

    \bibitem[Mo1]{Mo1} K. Mousavand,
\emph{$\tau$-tilting finiteness of minimal representation-infinite algebras}, PhD thesis presented at the University of Montréal (2020).

    \bibitem[Mo2]{Mo2} K. Mousavand,
\emph{$\tau$-tilting finiteness of non-distributive algebras and their module varieties}, Journal of Algebra 608 (2022), 673--690.

    \bibitem[MP1]{MP1} K. Mousavand, C. Paquette,
\emph{Biserial algebras and generic bricks}, arXiv:2209.05696 (2022).

\bibitem[MP2]{MP2} K. Mousavand, C. Paquette,
\emph{Geometric interactions between bricks and $\tau$-rigidity}, arXiv:2311.14863 (2023).

    \bibitem[MP3]{MP3} K. Mousavand, C. Paquette,
\emph{Minimal $\tau$-tilting infinite algebras}, Nagoya Mathematical Journal 249 (2022), 221--238.


\bibitem[MS1]{MS1} P. Malicki, A. Skowroński,
\emph{Algebras with separating almost cyclic coherent Auslander–Reiten components}, J. Algebra 291 (2005), 208--237.

\bibitem[MS2]{MS2} P. Malicki, A. Skowroński,
\emph{On the indecomposable modules in almost cyclic coherent Auslander-Reiten components}, J. Math. Soc. Japan 63 (2011), 1121--1154.

    \bibitem[MS3]{MS3} P. Malicki, A. Skowroński,
\emph{The structure and homological properties of generalized standard Auslander–Reiten components}, Journal of Algebra 518, (2019), 1--39.

\bibitem[MS]{MS} F. Marks, J. Šťovíček, \emph{Torsion classes, wide subcategories and localisations}, Bulletin of the LMS 49 (2017), no. 3, 405--416.

    \bibitem[Pf1]{Pf1} C. Pfeifer, 
\emph{A generic classification of locally free representations of affine GLS algebras}, 	arXiv:2308.09587 (2023).

    \bibitem[Pf2]{Pf2} C. Pfeifer, 
\emph{Remarks on $\tau$-tilted versions of the second Brauer-Thrall Conjecture}, arXiv:2308.09576 (2023).

    \bibitem[Pl]{Pl} P-G. Plamondon, 
\emph{Generic bases for cluster algebras from the cluster category}, Int. Math. Res. Not. (2013), 2368–2420.

    \bibitem[PY]{PY} P-G. Plamondon, T. Yurikusa, with an appendix by B. Keller,
\emph{Tame algebras have dense g-fans}, Int. Math. Res. Not. (2023), 2701--2747.

    \bibitem[Re]{Re} C. Riedtmann, 
\emph{Many algebras with the same Auslander-Reiten quiver}, Bull. London Math. Soc. 15 (1983), 43--47.

    \bibitem[STV]{STV} S. Schroll, H. Treffinger, Y. Valdivieso, \emph{On band modules and $\tau$-tilting finiteness}, Mathematische
Zeitschrift 299 (2021), 4567--4575.

    \bibitem[SS]{SS} D. Simson, A. Skowroński,
\emph{Elements of the Representation Theory of Associative Algebras. Vol. 2}, London Mathematical Society Student Texts 71. Cambridge: Cambridge University Press (2007). Tubes and concealed algebras of Euclidean type.

    \bibitem[Sk1]{Sk1} A. Skowroński,
\emph{Generalized canonical algebras and standard stable tubes}, Colloq. Math., 90 (2001), 77--93.

    \bibitem[Sk2]{Sk2} A. Skowroński,
\emph{Generalized standard Auslander–Reiten components}, J. Math. Soc. Japan 46 (1994), 517--543

\bibitem[Zh]{Zh} Y. Zhang
\emph{The structure of stable components}, Can. J. Math. 43 (1991), 652--672.

\end{thebibliography}
\end{document}